\newcommand{\beq}{\begin{equation}}
\newcommand{\eeq}{\end{equation}}
\newcommand{\bea}{\begin{eqnarray*}}
\newcommand{\eea}{\end{eqnarray*}}
\newcounter{casenum}	
\newcounter{subcasenum}	
\numberwithin{subcasenum}{casenum}
\newcounter{subsubcasenum}	
\numberwithin{subsubcasenum}{subcasenum}
\newcounter{stagenum}
\theoremstyle{plain}
\newtheorem{lemma}{Lemma} 
\newtheorem{theorem}[lemma]{Theorem}
\newtheorem{proposition}[lemma]{Proposition}
\newtheorem{observation}[lemma]{Observation}
\newtheorem{conjecture}[lemma]{Conjecture}
\theoremstyle{definition}
\newtheorem{definition}[lemma]{Definition}
\newcommand{\be}{\begin{equation}}
\newcommand{\ee}{\end{equation}}
\newcommand{\bx}{\begin{bmatrix}}
\newcommand{\ex}{\end{bmatrix}}
\def\Null{{\mathsf{Null}}}
\def\True{{\mathsf{True}}}
\def\False{{\mathsf{False}}}
\def\st{\colon\,}
\def\cF{\mathcal{F}}
\def\cA{{\mathcal A}}
\def\cB{{\mathcal B}}
\def\cP{{\mathcal P}}
\def\cC{{\mathcal C}}
\def\cL{{\mathcal L}}
\def\cR{{\mathcal R}}
\def\R{\mathbb R}
\def\nCk{\binom{[n]}{k}}
\def\cutpos{\cC^+}
\def\cutneg{\cC^-}
\def\cutstar{\cC^*}
\def\genpos{\cA^+}
\def\genneg{\cA^-}
\def\branchpos{\cB^+}
\def\branchneg{\cB^-}
\def\vx{{\mathbf x}}
\def\colex{\operatorname{colex}}
\def\lex{\operatorname{lex}}
\def\meet{\wedge}
\def\join{\vee}
\def\StochasticPropagation{{\textsc{StochasticPropagation}}}
\def\BranchAndCut{{\textsc{BranchAndCut}}}
\def\PropagatePositive{{\textsc{PropagatePositive}}}
\def\PropagateNegative{{\textsc{PropagateNegative}}}
\def\SelectBranchSet{{\textsc{SelectBranchSet}}}
\def\maxkvalue{7}
\begin{document}

\title{A Branch-and-Cut Strategy for the Manickam-Mikl\'os-Singhi Conjecture}
\author{Stephen G. Hartke\thanks{Department of Mathematics, University of Nebraska--Lincoln, \texttt{hartke@math.unl.edu}. Supported by National Science Foundation Grant DMS-0914815.} \and Derrick Stolee\thanks{Department of Mathematics, University of Illinois, \texttt{stolee@illinois.edu}.}}
\date{\today}
\maketitle

\vspace{-2pc}

\begin{abstract}
	The Manickam-Mikl\'os-Singhi Conjecture states that when $n \geq 4k$, every multiset of $n$ real numbers with nonnegative total sum has at least $\binom{n-1}{k-1}$ $k$-subsets with nonnegative sum.
	We develop a branch-and-cut strategy using a linear programming formulation to show that verifying the conjecture for fixed values of $k$ is a finite problem.
	To improve our search, we develop a zero-error randomized propagation algorithm.
	Using implementations of these algorithms, we verify a stronger form of the conjecture for all $k \leq \maxkvalue$.
\end{abstract}

\section{Introduction}

Given a sequence $x_1,\dots,x_n$ of real numbers with nonnegative sum $\sum_{i=1}^n x_i$, it is natural to ask how many partial sums must be nonnegative.
Bier and Manickam~\cite{BM} showed that every nonnegative sum $\sum_{i=1}^n x_i$ has at least $2^{n-1}$ nonnegative partial sums.
Manickam, Mikl\'os, and Singhi~\cite{MM,MS} considered the situation when each partial sum has exactly $k$ terms (we call such partial sums \emph{$k$-sums}), and they conjectured there are many nonnegative $k$-sums when $n$ is sufficiently large.

\begin{conjecture}[Manickam-Mikl\'os-Singhi~\cite{MM,MS}]
\label{conj:mms}
Let $k \geq 2$ and $n \geq 4k$.
For all $x_1,\dots,x_n \in \R$ such that $\sum_{i=1}^n x_i \geq 0$, there are at least $\binom{n-1}{k-1}$ subsets $S \subset [n]$ with $|S| = k$ such that  $\sum_{i\in S} x_i \geq 0$.
\end{conjecture}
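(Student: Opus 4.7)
The plan is to translate the conjecture, for each fixed $k$, into a family of feasibility linear programs indexed by the combinatorial ``type'' of a putative counterexample. Given $(x_1,\dots,x_n)$, record the bipartition $(\cS,\cS^c)$ of $\binom{[n]}{k}$ according to whether $\sum_{i\in S}x_i\geq 0$. For a fixed bipartition with $|\cS|<\binom{n-1}{k-1}$, the constraints $\sum_{i=1}^n x_i \geq 0$, $\sum_{i\in S}x_i\geq 0$ for $S\in\cS$, and $\sum_{i\in S}x_i< 0$ for $S\notin\cS$ form a linear feasibility problem in the unknowns $x_1,\dots,x_n$. A counterexample exists if and only if some such bipartition is LP-feasible, so the conjecture for a given $n$ reduces to showing every bipartition with too few positive sets is infeasible.

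Next I would organize this enumeration as a branch-and-cut procedure. A node of the search tree stores a partial assignment of signs to some $k$-sums; branching selects an undetermined $T\in\binom{[n]}{k}$ and opens two children corresponding to $T\in\cS$ versus $T\notin\cS$. Infeasibility of the LP relaxation (say, detected via a Farkas certificate from the dual) prunes an entire subtree. To strengthen pruning, I would add cutting planes coming from (i) averaging arguments, e.g.\ $\sum_{S\ni i}\sum_{j\in S}x_j=\binom{n-1}{k-1}\sum_j x_j+\binom{n-2}{k-2}(n\,x_i-\sum_j x_j)$-style identities, that yield valid linear combinations of the $k$-sum constraints; and (ii) symmetry breaking, since the $S_n$-action on coordinates preserves feasibility and the cost, so canonical forms for $\cS$ (e.g.\ lex-minimal under relabeling) can be enforced at each node.

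The essential step is reducing the infinite family of $n$'s (for fixed $k$) to a finite computation. A natural route is to show that any minimal counterexample can be replaced by one in which $(x_1,\dots,x_n)$ takes at most some $f(k)$ distinct values: once the sign-pattern of the $k$-sums is fixed, the feasible $(x_i)$ form a polyhedron, and any vertex of it has a bounded description in terms of $k$. Another route is a ``blowup monotonicity'' lemma: if the conjecture holds for some $n_0 \geq 4k$, then replacing a coordinate $x_i$ by two copies $x_i/2,x_i/2$ preserves nonnegativity of $k$-sums in a controlled way, letting one bootstrap from $n_0$ to all $n>n_0$. Either approach collapses verification for all $n\geq 4k$ to checking LPs over a bounded combinatorial space.

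The hard part, and the main obstacle, is taming the combinatorial explosion. The number of sign-patterns on $\binom{[n]}{k}$ is astronomical, and pruning has to be extremely aggressive: one needs sharp cut families, branching heuristics that preferentially split on $k$-sets whose sign most constrains the LP, and $S_n$-symmetry reduction integrated at every node so that two patterns equivalent under relabeling are never both explored. A secondary difficulty is that the LP feasibility tests must be performed in exact (rational) arithmetic, since a floating-point false feasibility would invalidate a claimed verification; this will cap the range of $k$ that can actually be finished in reasonable time, which is presumably why the paper stops at $k\leq \maxkvalue$.
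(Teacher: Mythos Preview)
First, note that the statement is a \emph{conjecture}: the paper does not prove it, but rather verifies it computationally for $k\le 7$.  So your proposal should be read as a strategy for fixed $k$, and at that level it is broadly on the right track: the paper also formulates a linear program whose feasibility encodes the existence of a counterexample, and runs a branch-and-cut search over sign assignments to $k$-sets.

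There are, however, two substantive differences, and the second is a genuine gap.

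\textbf{Symmetry and propagation.}  Your symmetry reduction is to work with $S_n$-canonical sign patterns on $\binom{[n]}{k}$.  The paper instead breaks symmetry once, by sorting the coordinates so that $x_1\ge x_2\ge\cdots\ge x_n$, and then exploits the resulting \emph{shift poset} on $\binom{[n]}{k}$: if $S\succeq T$ (elementwise on sorted indices) then $\sigma_S(\vx)\ge\sigma_T(\vx)$.  This turns the sign pattern into an up-set/down-set pair in a lattice, so a single branching decision automatically determines the sign of an entire filter or ideal, and one can compute exactly how many nonnegative $k$-sums are forced by a partial assignment via the counting functions $L_k^*(S)$.  This is far stronger than generic $S_n$-orbit reduction and is what makes the search tractable; your ``averaging identities'' cuts do not recover it.

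\textbf{Reducing to finitely many $n$.}  This is where your proposal has a real gap.  Neither of your suggested routes works as stated.  The ``bounded distinct values at a vertex'' observation does not bound $n$: even if an extremal $\vx$ uses only $f(k)$ distinct values, the multiplicities can be arbitrary.  The ``blowup monotonicity'' idea---replace $x_i$ by two copies of $x_i/2$---does not control the count of nonnegative $k$-sums in any obvious way (a $k$-set using both copies has a different sum than one using $x_i$ once), and you would need a precise inequality you have not supplied.  The paper instead invokes a specific lemma of Chowdhury: if $\hat g(n,k)=0$ then $\hat g(n+k,k)=0$.  Combined with the Baranyai-based fact that $\hat g(n,k)=0$ whenever $k\mid n$, this means one only needs to verify $\hat g(n,k)=0$ for $k$ consecutive values of $n$ to conclude it for all larger $n$.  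Without an analogue of this step, your scheme does not yield a finite verification for any fixed $k$.
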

The bound $\binom{n-1}{k-1}$ is sharp since the example $x_1 = n-1$ and $x_i = -1$ for $i \neq 1$ has sum zero and a $k$-sum is nonnegative exactly when it contains $x_1$.
The bound $n \geq 4k$ is not necessarily sharp.

In this paper, we develop a computational method to solve fixed cases of the conjecture based on a poset of $k$-sums and a linear programming instance.
In particular, we prove a stronger statement than the conjecture for all $k \leq \maxkvalue$.
This leads us to formulate a stronger form of the conjecture (see Conjecture~\ref{conj:ours}).

Conjecture~\ref{conj:mms} is trivial for $k = 1$ and is a simple exercise for $k = 2$.
Marino and Chiaselotti~\cite{MC} proved the conjecture for $k = 3$.
Bier and Manickam~\cite{BM} showed there exists a minimum integer $f(k)$ such that for all $n \geq f(k)$, there are at least $\binom{n-1}{k-1}$ nonnegative $k$-sums in any nonnegative sum of $n$ terms.
Subsequent results~\cite{B03, BM, C02, CIM, T12} give several exponential upper bounds on $f(k)$.
Alon, Huang, and Sudakov \cite{AHS} showed the polynomial bound $f(k) \leq \min\{ 33k^2, 2k^3\}$.
Chowdhury~\cite{C} proved $f(3) = 11$ and $f(4) \leq 24$.

We prove $f(4) = 14$, $f(5) = 17$, $f(6) = 20$, and $f(7) = 23$.
For $k \leq \maxkvalue$ and $n < f(k)$, we find that the nonnegative sums with the fewest nonnegative $k$-sums have the following structure: for positive integers $a$ and $b$ summing to $n$, let $x_1 = \cdots = x_b = a$ and $x_{b+1} = \cdots = x_n = -b$.
When $3k < n < f(k)$, the extremal examples have $a = 3$, $b = n - 3$, and $\binom{n-3}{k}$ nonnegative $k$-sums.
This leads us to make the following conjecture.

\begin{conjecture}\label{conj:ours}
	Let $N_k$ be the smallest integer such that $\binom{N_k-3}{k} \geq \binom{N_k-1}{k-1}$.
	Then $f(k) = N_k$, 
	and hence\footnote{This limit was computed by first simplifying $\binom{x-3}{k}-\binom{x-1}{k-1}=0$ to $(x-k)(x-k-1)(x-k-2) - k(x-1)(x-2)=0$ and taking the real root for $x$ of the resulting cubic.}
	\[
		\lim_{k\to\infty} \frac{f(k)}{k}  	
			= \lim_{k\to\infty} \frac{N_k}{k} 
			\approx 3.147899.
		\]
\end{conjecture}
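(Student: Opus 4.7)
The plan is to split Conjecture~\ref{conj:ours} into three components: the limit evaluation, the lower bound $f(k) \geq N_k$, and the upper bound $f(k) \leq N_k$. The limit is already reduced by the footnote to extracting the real root of the cubic $(x-k)(x-k-1)(x-k-2)-k(x-1)(x-2)=0$, and a standard asymptotic analysis of this root in $k$ gives the claimed value $\approx 3.147899$. The substantive content is thus the equality $f(k)=N_k$ itself.

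For the lower bound, I would exhibit, for every $n$ with $3k < n < N_k$, a multiset of size $n$ with nonnegative sum and strictly fewer than $\binom{n-1}{k-1}$ nonnegative $k$-sums. The candidate is exactly the family described in the paragraph preceding the conjecture: take $n-3$ copies of $3$ and $3$ copies of $-(n-3)$, which sum to $0$. A $k$-subset using $j$ copies of $3$ has sum $3j - (k-j)(n-3)$, and for $n > 3k$ this is nonnegative only when $j=k$, so there are exactly $\binom{n-3}{k}$ nonnegative $k$-sums. By the defining inequality for $N_k$ and the monotonicity of $n\mapsto \binom{n-3}{k}/\binom{n-1}{k-1}$, we get $\binom{n-3}{k} < \binom{n-1}{k-1}$ for every $n < N_k$, yielding $f(k) \geq N_k$. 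The smaller range $n \leq 3k$ is handled by comparing the weightings $(a,b)$ from the same family and confirming that they give no more nonnegative $k$-sums than $\binom{n-3}{k}$.

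The upper bound $f(k) \leq N_k$ is the main obstacle, because it already implies Conjecture~\ref{conj:mms} with the sharp threshold. The current state of the art, the Alon-Huang-Sudakov bounds of $33k^2$ and $2k^3$, leaves a super-linear gap to the conjectured $N_k \sim 3.148 k$, so a genuinely sharper argument is needed. I would pursue two parallel approaches. First, extend the branch-and-cut framework of this paper uniformly in $k$: empirically the linear-programming relaxation over the poset of $k$-sums has its optimum attained by the $\{3,-(n-3)\}$ family, and the goal would be a parametric family of dual certificates (facet-defining cuts) that certifies $\binom{n-1}{k-1}$ simultaneously for all $n \geq N_k$. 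Second, apply a compression or shifting reduction to limit the candidate extremal weightings to the two-valued form $x_i \in \{a,-b\}$, and then optimize over the discrete parameters $(a,b)$ to show that $a=3$ is extremal once $n > 3k$.

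The hardest step is producing the dual certificates uniformly in $k$. The algorithm developed here succeeds for $k \leq \maxkvalue$ because each LP is finite and the propagation strategy prunes the search aggressively, but recognizing an inductive or symmetrized pattern in the cuts produced by the algorithm, so that the same certificates can be synthesized for every $k$, is precisely the divide between a case-by-case verification and a general proof. Any such pattern would in particular yield a new proof of the full Manickam-Mikl\'os-Singhi conjecture with a nearly optimal threshold.
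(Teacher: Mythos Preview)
The statement you are addressing is Conjecture~\ref{conj:ours}, not a theorem; the paper does not prove it and offers no proof to compare against. The paper's support for the conjecture is (i) the computational verification $f(k)=N_k$ for $k\le \maxkvalue$ via \BranchAndCut, (ii) the known lower-bound example $3^{n-3}\,(-(n-3))^3$ for $3k<n<N_k$, and (iii) an exhaustive check of the two-valued vectors $a^b\,(-b)^a$ for $k\le 250$. Nothing more is claimed.

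Your decomposition into the limit, the lower bound $f(k)\ge N_k$, and the upper bound $f(k)\le N_k$ is the right one, and your lower-bound argument is correct and is exactly the one the paper alludes to: for $n=N_k-1>3k$ the vector $3^{n-3}\,(-(n-3))^3$ has precisely $\binom{n-3}{k}<\binom{n-1}{k-1}$ nonnegative $k$-sums, so $f(k)\ge N_k$. (You do not actually need the monotonicity of the ratio, nor any analysis of $n\le 3k$; a single failing value $n=N_k-1$ suffices.)

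The genuine gap is the upper bound, and you identify it honestly: $f(k)\le N_k$ would sharpen the Manickam--Mikl\'os--Singhi conjecture beyond anything currently known, and neither of your two suggested approaches---a parametric family of LP dual certificates valid for all $k$, or a shifting reduction to two-valued vectors---is carried out. Both are reasonable research directions, but as written they are aspirations, not arguments. In particular, the reduction to vectors of the form $a^b\,(-b)^a$ is exactly what the paper \emph{checked} numerically for $k\le 250$ but could not prove; establishing that reduction in general would itself be a major result. So your proposal is an accurate map of the problem rather than a proof, which is appropriate given that the statement is, in the paper, explicitly a conjecture.
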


In the next section we develop some necessary notation and preliminary results.
In Section~\ref{sec:poset}, we place a symmetry-breaking constraint on the vectors and investigate a natural poset among the $k$-sums.
In Section~\ref{sec:algorithm}, we develop our branch-and-cut strategy for verifying the conjecture.
We then modify this algorithm by adding a randomized propagation algorithm in Section~\ref{sec:branchless}.
In Section~\ref{sec:strong} we discuss stronger forms of the conjecture.

\subsection{Preliminaries}

Let $a_1,\dots, a_\ell$ be real numbers and $e_1,\dots,e_\ell$ be positive integers.
We denote by $a_1^{e_1}\ a_2^{e_2}\ \dots a_\ell^{e_\ell}$ the vector in $\R^n$ with the first $e_1$ coordinates equal to $a_1$, the next $e_2$ coordinates equal to $a_2$, and so on until the last $e_\ell$ coordinates are equal to $a_\ell$.
Using this notation, the vector $(n-1)^1\ (-1)^{n-1}$ is conjectured to achieve the minimum number of nonnegative $k$-sets when $n \geq 4k$.

For a vector $\vx = (x_1,\dots,x_n) \in \R^n$ and a $k$-set $S$, define $\sigma_S(\vx) = \sum_{i\in S} x_i$.
The number of nonnegative $k$-sums of $\vx$ is $s_k(\vx) = \left|\left\{ S \in \nCk : \sigma_S(\vx) \geq 0\right\}\right|$.
For small values of $k$ and $n$, we will compute the minimum integer $g(n,k)$ such that all vectors $\vx \in \R^n$ with nonnegative sum have $s_k(\vx) \geq g(n,k)$.
By the sharpness example, $g(n,k) \leq \binom{n-1}{k-1}$ always.
Define the \emph{deficiency function} as $\hat g(n,k) = \binom{n-1}{k-1} - g(n,k)$.

Since removing the least coordinate from a vector  $\vx \in \R^n$ results in an $(n-1)$-coordinate vector with nonnegative sum, the function $g(n,k)$ is nondecreasing in $n$.
Bier and Manickam~\cite{BM} proved that $\hat g(n,k) = 0$ when $k$ divides $n$ by using a theorem of Baranyai~\cite{Baranyai}.
The following lemma of Chowdhury~\cite{C} reduces the problem of computing $f(k)$ to verifying $\hat g(n,k) = 0$ for a finite number of values $n$.

\begin{lemma}[Chowdhury~\cite{C}]\label{lma:remainders}
If $\hat g(n,k) = 0$, then $\hat g(n+k,k) = 0$.
\end{lemma}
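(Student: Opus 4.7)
The plan is to use a double-counting argument after first reducing to the case $\sum_{i=1}^{n+k} x_i = 0$. The reduction is standard: if $\sum x_i > 0$, replacing $x_1$ by $x_1 - \sum x_i$ yields a vector of zero sum whose $k$-sums decrease by $\sum x_i$ on $k$-subsets containing index $1$ and are unchanged on the others, so the count of nonnegative $k$-sums can only drop. Establishing the conclusion in the zero-sum case therefore suffices.

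With $\sum x_i = 0$ in hand, I would count the pairs $(S,T)$ where $T \subset [n+k]$ has size $n$, $S \subset T$ has size $k$, and $\sigma_S(\vx) \geq 0$. Counting by $S$ yields $s_k(\vx)\binom{n}{k}$, since each nonnegative $k$-sum is contained in exactly $\binom{n}{k}$ many $n$-subsets of $[n+k]$. Counting by $T$: when $\sigma_T(\vx) \geq 0$, the restricted vector $\vx|_T \in \R^n$ has nonnegative sum, so the hypothesis $\hat g(n,k) = 0$ guarantees at least $\binom{n-1}{k-1}$ nonnegative $k$-subsets of $T$. Restricting the count to such $T$ gives the lower bound $\binom{n-1}{k-1}\cdot P$, where $P$ is the number of nonneg-sum $n$-subsets.

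The crucial step is to express $P$ back in terms of $s_k(\vx)$, and this is where the zero-sum reduction pays off: the involution $T \mapsto [n+k]\setminus T$ satisfies $\sigma_T(\vx) = -\sigma_{[n+k]\setminus T}(\vx)$, identifying $P$ with the number of $k$-subsets $B$ with $\sigma_B(\vx) \leq 0$. Splitting the $\binom{n+k}{k}$ many $k$-subsets by sign of sum, we get $P = \binom{n+k}{k} - s_k(\vx) + z$, where $z$ is the number of $k$-subsets with $\sigma_B(\vx) = 0$. Substituting into $s_k(\vx)\binom{n}{k} \geq \binom{n-1}{k-1}P$ and using the identity $\binom{n-1}{k-1}/\binom{n}{k} = k/n$, a one-line rearrangement yields $s_k(\vx) \geq \tfrac{k}{n+k}\bigl(\binom{n+k}{k} + z\bigr) \geq \binom{n+k-1}{k-1}$, as required.

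The main pitfall to watch for is the temptation to attack the lemma purely by double counting without first normalizing to zero sum; without that step the $n$-subset sums cannot be translated back into bounds on $k$-subset sums via the complementation involution, and the resulting inequality is too weak. Once the reduction is in place, the remainder is a short self-contained calculation using only the stated hypothesis.
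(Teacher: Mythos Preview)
The paper does not prove this lemma; it is stated with attribution to Chowdhury's dissertation and used as a black box. Your argument is correct and self-contained: the zero-sum reduction is valid (decreasing one coordinate can only lose nonnegative $k$-sums), the double count of pairs $(S,T)$ gives $s_k(\vx)\binom{n}{k}\ge\binom{n-1}{k-1}P$, and the complementation $T\mapsto[n+k]\setminus T$ under the zero-sum hypothesis turns $P$ into $\binom{n+k}{k}-s_k(\vx)+z$, after which the identity $\binom{n-1}{k-1}/\binom{n}{k}=k/n$ yields $s_k(\vx)\ge\frac{k}{n+k}\binom{n+k}{k}=\binom{n+k-1}{k-1}$ as required.
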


This lemma allows the computation of $f(k)$ by first computing a finite number of values of $g(n,k)$.
We develop a finite process to verify $g(n,k) \geq t$ for fixed values of $k$ and $n$, and hence by Lemma~\ref{lma:remainders} we determine $f(k)$ when we find that $g(n,k) \geq \binom{n-1}{k-1}$ for $k$ consecutive values of $n$.
Our computation to verify $g(n,k) \geq t$ for fixed integers $n$, $k$, and $t$ is designed as a search for a vector $\vx \in \R^n$ with $s_k(\vx) < t$.
By constraining the order of the coordinates in a vector $\vx \in \R^n$, we are able to make significant deductions about the sign of the $k$-sums of $\vx$, which makes the computation very efficient.
We first discuss this constraint on the coordinates of $\vx$ in Section~\ref{sec:poset}, and then discuss the algorithm in Section~\ref{sec:algorithm}.

\section{The Shift Poset}\label{sec:poset}

To better control the vectors $\vx \in \R^n$ with nonnegative sum, we make a simple ordering constraint that adds significant structure.
Let $F_n$ be the set of vectors $\vx \in \R^n$ with nonnegative sum and with the sequence $x_i$ nondecreasing. 
That is,
\[
	F_n = \left\{ \vx  \in \R^n : \sum_{i=1}^n x_i \geq 0\text{ and } x_1 \geq x_2 \geq \cdots \geq x_n \right\}.
\]	

Denote by $[n]$ the set $\{1,\dots,n\}$ and by $\nCk$ the collection of $k$-subsets of $[n]$.
For two $k$-sets $S = \{ i_1 < i_2 < \cdots < i_k\}$ and $T = \{ j_1 < j_2 < \cdots < j_k \}$, let $S \succeq T$ if and only if $i_\ell \leq j_\ell$ for all $\ell \in \{1,\dots,k\}$.
We call this poset over $\nCk$ the \emph{shift poset}.
Since we write $x_1,\dots,x_n$ from left to right in nondecreasing order and associate a $k$-set $S$ as with the values $x_i$ for $i \in S$, then when $S \succeq T$ we say $S$ is \emph{to the left} of $T$ and $T$ is \emph{to the right} of $S$.
Observe that a relation $S \succeq T$ is a cover relation if and only if $S \setminus T = \{ i\}$, $T \setminus S = \{ j\}$, and $i = j-1$.

Since we restrict vectors in $F_n$ to have nondecreasing values as the index increases, $S \succeq T$ implies $\sigma_S(\vx) \geq \sigma_T(\vx)$ for all $\vx \in F_n$.
Therefore, if $\sigma_T(\vx)$ is nonnegative, then so is $\sigma_S(\vx)$.
Similarly, if $\sigma_S(\vx)$ is strictly negative, then so is $\sigma_T(\vx)$.

Given $S \in \nCk$, let the \emph{left shift} of $S$ be $\cL_k(S) = \{ T \in \nCk \st T \succeq S\}$ and the \emph{right shift} of $S$ be $\cR_k(S) = \{ T \in \nCk \st S \succeq T\}$.
Given a set $\cA \subseteq \nCk$, the \emph{left closure} of $\cA$, denoted $\cL_k(\cA)$, is the union of all families $\cL_k(S)$ over all sets $S \in \cA$.
Similarly, the \emph{right closure} of $\cA$, denoted $\cR_k(\cA)$,  is the union of all families $\cR_k(S)$ over all sets $S \in \cA$.

The shift poset has many interesting properties, including the fact that it is a lattice, which will be critical in later calculations.
Fix any list $\cF = \{ S_1, \dots, S_\ell\}$ of $k$-sets where $S_j = \{ i_{j,1} < \cdots < i_{j,k}\}$.
The \emph{meet} of $\cF$, denoted $\meet \cF$ or $\meet_{j=1}^\ell S_j$, is the $k$-set $T$ such that for all $T'$ where $S_j \succeq T'$ for all $j \in [\ell]$, then $T \succeq T'$.
The \emph{join} of $\cF$, denoted $\join \cF$ or $\meet_{j=1}^\ell S_j$, is the $k$-set $T$ such that for all $T'$ where $T' \succeq S_j$ for all $j \in [\ell]$, then $T' \succeq T$.
The meet and join can be computed as
\begin{align*}
	\meet \cF = \meet_{j=1}^\ell S_j &= \big\{ \max \{ i_{j,t} : j \in [\ell]\} : t \in [k] \big\},\\
	\join \cF = \join_{j=1}^\ell S_j  &= \big\{ \min \{ i_{j,t} : j \in [\ell]\} : t \in [k] \big\}.
\end{align*}

Observe that the $k$-sets to the left of all $S_1,\dots,S_\ell$ are exactly the $k$-sets to the left of the join $\join_{j=1}^\ell S_j$, and the $k$-sets to the right of all $S_1,\dots,S_\ell$ are exactly the $k$-sets to the right of the meet $\meet_{j=1}^\ell S_j$.
That is,
\[
	\bigcap_{j=1}^\ell \cL_k(S_j) = \cL\left(\join_{j=1}^\ell S_j\right),\qquad 
	\bigcap_{j=1}^\ell \cR_k(S_j) = \cR\left(\meet_{j=1}^\ell S_j\right).
\]

The shift poset preserves order with two standard linear orders of $k$-sets: the \emph{lexicographic} and \emph{colexicographic} orders.
Let $\lex(S)$ and $\colex(S)$ be the \emph{lexicographic} and \emph{colexicographic} rank, respectively, of a $k$-set $S = \{i_1 < \dots < i_k\}\subseteq \{1,\dots,n\}$, defined as 
\[
\lex(S) = \sum_{\ell=1}^k \sum_{j=i_{\ell-1}+1}^{i_\ell - 1} \binom{n - j}{k-\ell}, 
	\qquad	
\colex(S) = \sum_{\ell =1}^k \binom{i_\ell - 1}{\ell},
\]
where $i_0 = 0$ by convention.
These functions are bijections from $\nCk$ to the set $\{0, \dots, \binom{n}{k}-1\}$ with the property that when $S \succeq T$ we also have $\lex(S) \leq \lex(T)$ and $\colex(S) \leq \colex(T)$.
In addition to these ranking functions, the lexicographic and colexicographic orders also have efficient \emph{successor} and \emph{predecessor} calculations.
We use these methods to iterate over all $k$-sets to find $\succeq$-maximal or $\succeq$-minimal elements.

Finally, we note that the shift poset is isomorphic to the \emph{dominance poset} over compositions of $n+1$ into $k+1$ positive parts.
Given two compositions of $n+1$ into $k+1$ positive parts, where $A = \{a_1,\dots, a_{k+1}\}$ and $B = \{ b_1,\dots, b_{k+1}\}$,  $A \trianglelefteq B$ in the dominance order when $\sum_{\ell=1}^t a_\ell \leq \sum_{\ell=1}^t b_\ell$ for all $t \in [k]$.
Note that the dominance order, also called the \emph{majorization order}, is usually defined over partitions, but the defining inequalities are the same for compositions (see~\cite{Stanley}).

For $S = \{i_1 < \cdots < i_k\}$, let $A_S = \{ a_1, \dots, a_{k+1}\}$ where $a_1 = i_1$, $a_\ell = i_\ell -i_{\ell-1}$ for $2 \leq \ell \leq k$, and $a_{k+1} = n+1-i_k$.
The image $A_S$ has the property that $\sum_{\ell=1}^t a_\ell = i_t$ for $t \leq k$ and $\sum_{\ell=1}^{k+1} a_\ell = n + 1$.
Therefore, if $S \succeq T$ in the shift poset, then $A_S \trianglelefteq A_T$ in the dominance order.

\subsection{Counting Shifts}

The functions $L_k(S) = |\cL_k(S)|$ and $R_k(S) = |\cR_k(S)|$ count the size of the left and right shifts, respectively.
These shift functions are quickly computable using a recursive formula due to Chowdhury~\cite{C} which we prove for completeness.
If $S = \{i_1,\dots, i_k\}$ and $j \leq k$, let $S_j = \{i_1,\dots, i_j\}$.
We define $L_j(S) = |\cL_j(S_j)|$.

\begin{proposition}[Chowdhury~\cite{C}]
	For a $k$-set $S = \{ i_1 < i_2 < \cdots < i_k \} \in \nCk$,
	\[
	L_k(S) = \binom{i_k}{k} - \binom{i_k-i_1}{k} - \sum_{\ell=1}^{k-2} L_\ell(S)\binom{i_k-i_{\ell+1}}{k-\ell}.
	\]
\end{proposition}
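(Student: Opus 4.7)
The plan is to count $L_k(S) = |\cL_k(S)|$ by a direct case split on the ambient collection $\binom{[i_k]}{k}$. Since any $T = \{j_1 < \cdots < j_k\}$ with $T \succeq S$ forces $j_k \leq i_k$, we have $\cL_k(S) \subseteq \binom{[i_k]}{k}$, an ambient set of size $\binom{i_k}{k}$. For each $T$ in this ambient set, I would let $\ell^*(T)$ denote the smallest index at which $j_{\ell^*} > i_{\ell^*}$, with $\ell^*(T) = \infty$ exactly when $T \in \cL_k(S)$. Because $j_k \leq i_k$ is automatic, $\ell^*(T) = k$ cannot occur, so the bad $T$'s partition according to $\ell^* \in \{1, \dots, k-1\}$.

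Next I would count each class $N_{\ell^*} = |\{T \in \binom{[i_k]}{k} \st \ell^*(T) = \ell^*\}|$ separately. When $\ell^* = 1$ there are no constraints below index $1$ and one requires only $j_1 \geq i_1 + 1$, so $T$ is an arbitrary $k$-subset of $\{i_1+1, \dots, i_k\}$ and $N_1 = \binom{i_k - i_1}{k}$, matching the lone term pulled out of the sum. When $\ell^* \geq 2$, I would split $T$ into the head $\{j_1 < \cdots < j_{\ell^* - 1}\}$, which by minimality of $\ell^*$ belongs to $\cL_{\ell^* - 1}(S_{\ell^* - 1})$ and contributes $L_{\ell^* - 1}(S)$ choices, and the tail $\{j_{\ell^*} < \cdots < j_k\}$, which is an arbitrary $(k - \ell^* + 1)$-subset of $\{i_{\ell^*} + 1, \dots, i_k\}$ and contributes $\binom{i_k - i_{\ell^*}}{k - \ell^* + 1}$ choices.

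The key consistency check, which I expect to be the main subtle point, is that head and tail really can be chosen independently: one needs $j_{\ell^* - 1} < j_{\ell^*}$ for the two segments to glue into a valid $k$-set. This holds automatically because $j_{\ell^* - 1} \leq i_{\ell^* - 1} < i_{\ell^*} < j_{\ell^*}$, using only that the coordinates of $S$ are strictly increasing and the definition of $\ell^*$. Summing the resulting expressions and reindexing the $\ell^* \geq 2$ terms by $\ell = \ell^* - 1$ yields
\[
L_k(S) = \binom{i_k}{k} - \binom{i_k - i_1}{k} - \sum_{\ell = 1}^{k-2} L_\ell(S) \binom{i_k - i_{\ell + 1}}{k - \ell},
\]
which is exactly the claimed identity; the upper limit $k-2$ corresponds precisely to excluding the impossible case $\ell^* = k$.
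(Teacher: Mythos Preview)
Your proposal is correct and follows essentially the same approach as the paper's proof: both count the complement $\binom{[i_k]}{k}\setminus \cL_k(S)$ by partitioning according to the first index at which $j_t > i_t$, with your $\ell^*$ corresponding to the paper's $\ell+1$. Your explicit verification that the head and tail can be chosen independently (via $j_{\ell^*-1}\leq i_{\ell^*-1} < i_{\ell^*} < j_{\ell^*}$) is a detail the paper leaves implicit, but otherwise the arguments are identical.
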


\begin{proof}
Observe that $\cL_k(S) \subseteq \binom{[i_k]}{k}$.
We will count the sets $T = \{ j_1 < \cdots < j_k\}$ in $\binom{[i_k]}{k}\setminus \cL_k(S)$ as $\binom{i_k-i_1}{k} + \sum_{\ell=1}^{k-2} L_\ell(S_\ell)\binom{i_k-i_{\ell+1}}{k-\ell}$.
Every such set $T$ has some minimum parameter $\ell \geq 0$ such that $j_{\ell+1} > i_{\ell+1}$.
If $\ell = 0$, then the minimum of $T$ is strictly larger than $i_1$ and there are exactly $\binom{i_k-i_1}{k}$ such sets. 
When $\ell \geq 1$, we have $j_t \leq i_t$ for all $t \leq \ell$.
These sets can be constructed from the $L_\ell(S_\ell)$ $\ell$-subsets of $[n]$ that are to the left of $S_\ell$ and extended by exactly $\binom{i_k-i_{\ell+1}}{k-\ell}$ $(k-\ell$)-subsets of $\{i_{\ell+1}+1, \dots, i_k\}$.
\end{proof}

Observe that if $S = \{ i_1 < \cdots < i_k\}$, then by symmetry for $T = \{ n + 1 - i_\ell : \ell \in [k] \}$ we have $R_k(S) = L_k(T)$.
While computing $L_k(S)$ for all $k$-sets, we first compute and store the values of $L_j(S_j)$ for $1\leq j < k$.

\subsection{Required Negative Sets}

We use the shift function $L_k(S)$ to determine that certain $k$-sums must be negative in order to avoid $t$ nonnegative $k$-sums.

\begin{observation}\label{obs:forcenegative}
	Let $n,k, t$ be integers.
	If $S \in \nCk$ has $L_k(S) \geq t$, then every vector $\vx \in F_n$ with $s_k(\vx) < t$ has $\sigma_S(\vx) < 0$.
\end{observation}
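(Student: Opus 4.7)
The statement is a direct consequence of the monotonicity property already established for the shift poset, namely that $T \succeq S$ implies $\sigma_T(\vx) \geq \sigma_S(\vx)$ for every $\vx \in F_n$. So my plan is to prove the contrapositive: if $\sigma_S(\vx) \geq 0$ for some $\vx \in F_n$, then $s_k(\vx) \geq L_k(S)$, and in particular $s_k(\vx) \geq t$ whenever $L_k(S) \geq t$.

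First I would fix $\vx \in F_n$ with $\sigma_S(\vx) \geq 0$ and recall the definition $\cL_k(S) = \{ T \in \nCk : T \succeq S\}$. For any $T \in \cL_k(S)$, the monotonicity observation above gives $\sigma_T(\vx) \geq \sigma_S(\vx) \geq 0$, so $T$ contributes to $s_k(\vx)$. Hence
\[
s_k(\vx) = \left|\{T \in \nCk : \sigma_T(\vx) \geq 0\}\right| \geq |\cL_k(S)| = L_k(S).
\]
Combining with the hypothesis $L_k(S) \geq t$ yields $s_k(\vx) \geq t$, contradicting $s_k(\vx) < t$. Therefore $\sigma_S(\vx)$ must be strictly negative, which is exactly the claim.

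There is essentially no obstacle here: the argument is a one-line appeal to the order-preserving property of $\succeq$ on $F_n$ together with the definition of $L_k(S)$ as the cardinality of $\cL_k(S)$. The only thing worth being careful about is direction: $T \succeq S$ means $T$ lies \emph{to the left} of $S$, and since the coordinates in $F_n$ are ordered so that the largest values occupy the leftmost indices, $\sigma_T(\vx) \geq \sigma_S(\vx)$ rather than the reverse. Once that orientation is set, the proof is immediate and needs no further computation.
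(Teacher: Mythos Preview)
Your proof is correct and is exactly the intended argument; the paper in fact states this as an observation without proof, since it follows immediately from the monotonicity property $T \succeq S \Rightarrow \sigma_T(\vx) \geq \sigma_S(\vx)$ together with the definition $L_k(S) = |\cL_k(S)|$.
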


\begin{lemma}\label{lma:baranyaitrick}
	Fix integers $n, k, t$ with $t \leq \binom{n-1}{k-1}$ and a $k$-set $S \in \nCk$ with $1 \in S$.
	If $L_k(S) + g(n-k,k)  \geq t$, then every vector $\vx \in F_n$ with $s_k(\vx) < t$ has $\sigma_S(\vx) < 0$. 
\end{lemma}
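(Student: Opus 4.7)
The plan is to prove the contrapositive: assume $\vx \in F_n$ satisfies $\sigma_S(\vx) \ge 0$, and exhibit at least $L_k(S) + g(n-k,k) \ge t$ nonnegative $k$-sums, contradicting $s_k(\vx) < t$. The first $L_k(S)$ of them come essentially for free from the shift-poset monotonicity already developed in this section: because $\vx$ is non-increasing, $T \succeq S$ implies $\sigma_T(\vx) \ge \sigma_S(\vx) \ge 0$, so every $T \in \cL_k(S)$ contributes a nonnegative $k$-sum. Moreover, since $1 \in S$ is the smallest possible index, the relation $T \succeq S$ forces $1 \in T$, so every $k$-set counted here contains the index $1$.

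I would then turn to the complement vector $\vx' = (x_i)_{i \notin S} \in \R^{n-k}$, still non-increasing. Each $k$-subset of $[n] \setminus S$ avoids the index $1$ and is therefore automatically disjoint from the family counted above, so these form a natural source of additional nonnegative sums. In the easy case $\sum \vx' \ge 0$, $\vx'$ lies in $F_{n-k}$, and the definition of $g(n-k,k)$ directly supplies $g(n-k,k)$ further nonnegative $k$-subsets of $[n] \setminus S$, completing the count.

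The hard part will be the case $\sum \vx' < 0$, equivalent to $\sigma_S(\vx) > \sum_i x_i \ge 0$ strictly, where $g(n-k,k)$ cannot be applied to $\vx'$ directly. Here the plan is to redistribute the excess mass $\sigma_S(\vx)$ into the complement by collapsing the $k$ coordinates of $S$ into a single virtual coordinate of value $\sigma_S(\vx)$, producing a length-$(n-k+1)$ vector $\vy$ of total sum $\sum_i x_i \ge 0$ which, after sorting, lies in $F_{n-k+1}$. Monotonicity of $g$ in its first argument gives $g(n-k+1,k) \ge g(n-k,k)$, so $\vy$ carries at least $g(n-k,k)$ nonnegative $k$-subsets. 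Each such $k$-subset either avoids the virtual coordinate (in which case it corresponds directly to a nonnegative $k$-subset of $[n] \setminus S$ in $\vx$) or uses it together with some $(k-1)$-subset $U' \subseteq [n] \setminus S$ satisfying $\sigma_{S \cup U'}(\vx) \ge 0$; in the latter case I would invoke a Baranyai-style argument on the $(2k-1)$-element set $S \cup U'$ to extract a distinct nonnegative $k$-subset of $\vx$ lying outside $\cL_k(S)$.

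The main obstacle is establishing the injectivity of this virtual-coordinate correspondence, so that the $g(n-k,k)$ nonnegative $k$-sums of $\vy$ yield the same number of distinct nonnegative $k$-sums of $\vx$ disjoint from $\cL_k(S)$. Once that injection is verified, adding these $g(n-k,k)$ sums to the $L_k(S)$ from the shift-poset step gives $s_k(\vx) \ge L_k(S) + g(n-k,k) \ge t$, which is the desired contradiction with $s_k(\vx) < t$.
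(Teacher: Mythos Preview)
Your first step---collecting $L_k(S)$ nonnegative $k$-sums from $\cL_k(S)$ and noting that each of them contains the index $1$---is correct and matches the paper. The gap is in your ``hard case'' $\sigma_S(\vx) > \sum_i x_i$. The Baranyai-style extraction is not justified: from $\sigma_{S\cup U'}(\vx)\ge 0$ you would need a nonnegative $k$-subset of the $(2k-1)$-set $S\cup U'$ that \emph{avoids} the index $1$, and neither Baranyai's theorem nor an averaging argument gives control over $k$-subsets missing a prescribed element. Even granting that step, the injectivity you flag as the ``main obstacle'' is genuinely unresolved. A further warning sign is that you never use the hypothesis $t\le\binom{n-1}{k-1}$.

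The paper avoids the case split altogether by choosing a different $(n-k)$-element complement. The hypothesis $t\le\binom{n-1}{k-1}$ is precisely what forces the particular set $T=\{1,n-k+2,\dots,n\}$, which has $L_k(T)=\binom{n-1}{k-1}$, to satisfy $\sigma_T(\vx)<0$ whenever $s_k(\vx)<t$. Hence its complement $\{2,\dots,n-k+1\}$ has nonnegative sum, and $g(n-k,k)$ applies to it directly, producing $g(n-k,k)$ nonnegative $k$-sums each avoiding $1$ and therefore disjoint from $\cL_k(S)$. No virtual coordinate, no injection, no hard case.
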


\begin{proof}
	Fix a vector $\vx \in F_n$ with $s_k(\vx) < t$.
	The $k$-set $T = \{1, n-k+2, \dots, n\}$ has $L_k(T) = \binom{n-1}{k-1}$, so by Observation~\ref{obs:forcenegative},  $\sigma_T(\vx) < 0$.
	Since $\sum_{i=1}^n x_i \geq 0$, we have $\sum_{i=2}^{n-k+1} x_i = \sum_{i \notin T} x_i \geq 0$.
	Let $\cA$ be the family of nonnegative $k$-sums over $x_2,\dots,x_{n-k}$.
	By the definition of $g$, $|\cA| \geq g(n-k,k)$.
	For all $Q \in \cA$, the minimum element of $Q$ is at least 2 and hence $Q$ is not to the left of $S$, since the minimum element of $S$ is $1$.
	Therefore, if $\sigma_S(\vx) \geq 0$, then $s_k(\vx) \geq |\cL_k(S)| + |\cA| \geq L_k(S) + g(n-k,k)\geq t$, a contradiction.
\end{proof}	


Similar inferences, discussed in Section~\ref{sec:propagation}, will be important to the performance of our algorithms.

\section{The Branch-and-Cut Method}\label{sec:algorithm}

Our method to verify the conjecture centers on searching for vectors $\vx \in F_n$ which are counterexamples (i.e. $s_k(\vx) < \binom{n-1}{k-1}$).
In addition, for values of $n$ less than $f(k)$, we will search for vectors with the fewest number of nonnegative $k$-sets.
For integers $n, k,$ and $t$, we say a vector $\vx \in F_n$ is \emph{$(n,k,t)$-bad} if $s_k(\vx) < t$.
Using the shift poset, we will determine properties of such an $(n,k,t)$-bad vector and use that to guide our search.

For a vector $\vx \in F_n$, the $k$-subsets of $[n]$ are partitioned into two parts by whether the associated $k$-sums are nonnegative or strictly negative.
Let $\cutpos_{\vx}$ contain the $k$-sets $S$ with $\sigma_S(\vx) \geq 0$ and $\cutneg_{\vx}$ contain the the $k$-sets $T$ with $\sigma_T(\vx) < 0$.
Observe $\cutpos_{\vx} = \cL_k(\cutpos_{\vx})$  and $\cutneg_{\vx} = \cR_k(\cutneg_{\vx})$.
Our computational method focuses on testing whether vectors $\vx \in F_n$ exist, given certain contraints on $\cutpos_{\vx}$ and $\cutneg_{\vx}$.

\begin{definition}\label{def:lp}
Given families $\genpos$ and $\genneg$ of $k$-sets, let the linear program $P(k,n,\genpos, \genneg)$ be defined as
\begin{align*}
	P(k, n, \genpos, \genneg): \quad\qquad\qquad
	\text{minimize}\qquad x_1&\\
	\text{subject to } \sum_{i=1}^n x_i &\geq 0 \\
	x_i - x_{i+1} &\geq 0 \qquad \forall i\in \{1,\dots,n-1\}\\
	\sum_{i\in S} x_i &\geq 0 \qquad \forall S \in \genpos\\
	\sum_{i\in T} x_i &\leq -1 \quad \forall T \in \genneg\\
	x_1,\dots,x_n &\in \R
\end{align*}
\end{definition}

A vector $\vx \in F_n$ has $\cutpos_{\vx} \supseteq \cL_k(\genpos)$ and $\cutneg_{\vx} \supseteq \cR_k(\genneg)$ if and only if an appropriate scalar multiple of $\vx$ is a feasible solution to $P(n,k,\genpos,\genneg)$.

Due to the following observations, we can verify that $s_k(\vx) \geq t$ for the infinite set of vectors $\vx$ in $F_n$ by searching for partitions of $\nCk$ that correspond to the nonnegative and negative $k$-sums of $\vx$.

\begin{observation}\label{obs:completeness}
	If $\cL_k(\genpos) \cup \cR_k(\genneg) = \nCk$, then every feasible solution $\vx$ to $P(n,k,\genpos,\genneg)$ has $s_k(\vx) = |\cL_k(\genpos)|$.
	In particular, a feasible solution to $P(n,k,\genpos,\genneg)$ is $(n,k,t)$-bad if and only if $|\cL_k(\genpos)| < t$.
\end{observation}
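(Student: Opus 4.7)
The plan is to exhibit, for any feasible $\vx$ of $P(n,k,\genpos,\genneg)$, a complete accounting of its nonnegative and strictly negative $k$-sums via the two closures $\cL_k(\genpos)$ and $\cR_k(\genneg)$, and then read off $s_k(\vx)$ as the cardinality of the first closure.

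First I would observe that any feasible $\vx$ already lies in $F_n$: the constraints $\sum_i x_i \geq 0$ and $x_i - x_{i+1} \geq 0$ are exactly the defining conditions of $F_n$. Next I would use the monotonicity of partial sums over $F_n$ that was established in Section~\ref{sec:poset}: if $S \succeq T$ then $\sigma_S(\vx) \geq \sigma_T(\vx)$. From this I would conclude that for every $S \in \cL_k(\genpos)$ there exists $S' \in \genpos$ with $S \succeq S'$, so $\sigma_S(\vx) \geq \sigma_{S'}(\vx) \geq 0$; hence $\cL_k(\genpos) \subseteq \cutpos_{\vx}$. Dually, for every $T \in \cR_k(\genneg)$ there exists $T' \in \genneg$ with $T' \succeq T$, so $\sigma_T(\vx) \leq \sigma_{T'}(\vx) \leq -1 < 0$, giving $\cR_k(\genneg) \subseteq \cutneg_{\vx}$.

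Now I would invoke the hypothesis $\cL_k(\genpos) \cup \cR_k(\genneg) = \nCk$. Since $\cutpos_{\vx}$ and $\cutneg_{\vx}$ partition $\nCk$ (by the sign dichotomy in their definitions), the two inclusions above force the two unions to coincide term-by-term, i.e. $\cL_k(\genpos) = \cutpos_{\vx}$ and $\cR_k(\genneg) = \cutneg_{\vx}$. As a byproduct the two closures are automatically disjoint whenever the LP has any feasible solution (otherwise some $k$-set would be forced to have a nonnegative and a strictly negative sum simultaneously, a contradiction). Taking cardinalities yields $s_k(\vx) = |\cutpos_{\vx}| = |\cL_k(\genpos)|$.

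The second sentence is then immediate: $\vx$ is $(n,k,t)$-bad iff $s_k(\vx) < t$, which by the previous equality is equivalent to $|\cL_k(\genpos)| < t$, a condition depending only on $\genpos$ and not on $\vx$. There is no substantive obstacle here; the only subtlety worth flagging explicitly in the write-up is the implicit consistency observation that feasibility of $P(n,k,\genpos,\genneg)$ already guarantees $\cL_k(\genpos) \cap \cR_k(\genneg) = \varnothing$, which will be tacitly used by the branch-and-cut framework when it treats an infeasible LP as a successful pruning.
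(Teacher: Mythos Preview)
Your argument is correct and is exactly the natural justification the paper has in mind; the paper records this as an \emph{Observation} without proof, relying on precisely the monotonicity $S\succeq T\Rightarrow\sigma_S(\vx)\geq\sigma_T(\vx)$ over $F_n$ and the partition $\cutpos_\vx\cup\cutneg_\vx=\nCk$ that you invoke. Your side remark that feasibility forces $\cL_k(\genpos)\cap\cR_k(\genneg)=\varnothing$ is also correct and worth keeping.
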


\begin{observation}\label{obs:pruning}
	If $P(n,k,\genpos,\genneg)$ is infeasible, then no vector $\vx \in F_n$ has $\cutpos_{\vx} \supseteq \cL_k(\genpos)$ and $\cutneg_{\vx}\supseteq \cR_k(\genneg)$.
\end{observation}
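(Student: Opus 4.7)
The plan is to prove the contrapositive: assume there exists a vector $\vx \in F_n$ with $\cutpos_{\vx} \supseteq \cL_k(\genpos)$ and $\cutneg_{\vx} \supseteq \cR_k(\genneg)$, and then exhibit a feasible point of $P(n,k,\genpos,\genneg)$.

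First I would inspect each constraint of the LP and verify it is either already satisfied by $\vx$ or can be arranged by scaling. Since $\vx \in F_n$, the sum constraint $\sum_i x_i \geq 0$ and the monotonicity constraints $x_i - x_{i+1} \geq 0$ hold automatically. For each $S \in \genpos$, the hypothesis $\cL_k(\genpos) \subseteq \cutpos_{\vx}$ combined with $\genpos \subseteq \cL_k(\genpos)$ gives $\sigma_S(\vx) \geq 0$. Similarly, for each $T \in \genneg$, we have $T \in \cR_k(\genneg) \subseteq \cutneg_{\vx}$, and so $\sigma_T(\vx) < 0$; the only concern is that the LP demands the strict inequality $\sigma_T(\vx) \leq -1$.

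The main step is then a scaling argument. Because $\genneg$ is finite and each $\sigma_T(\vx)$ is strictly negative, I can set
\[
c = \max\left\{1,\ \max_{T \in \genneg} \frac{1}{-\sigma_T(\vx)}\right\}
\]
(with the inner maximum absent when $\genneg = \nul$) and consider $c\vx$. Multiplying by the positive constant $c$ preserves the signs and ordering of all partial sums, so $c\vx$ still satisfies the nonnegativity, monotonicity, and $\genpos$-constraints, while now $c\,\sigma_T(\vx) \leq -1$ for each $T \in \genneg$. Thus $c\vx$ is feasible for $P(n,k,\genpos,\genneg)$, contradicting infeasibility.

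I do not expect any real obstacle: the key observation is simply that the LP uses strict-looking normalization $\leq -1$ while the poset-theoretic definition of $\cutneg_{\vx}$ uses only strict negativity, and scaling bridges the gap. The only detail worth stating carefully is that the finiteness of $\genneg$ is what allows the uniform scaling constant $c$ to exist.
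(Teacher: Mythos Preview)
Your argument is correct and matches the paper's approach: the paper states just before this observation that a vector $\vx \in F_n$ has $\cutpos_{\vx} \supseteq \cL_k(\genpos)$ and $\cutneg_{\vx} \supseteq \cR_k(\genneg)$ if and only if an appropriate scalar multiple of $\vx$ is feasible in $P(n,k,\genpos,\genneg)$, and leaves the observation itself without further proof. Your scaling argument is exactly what ``appropriate scalar multiple'' means, spelled out in detail.
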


\begin{observation}\label{obs:propagate}
	If $|\cL_k(\genpos)| \geq t$, then $s_k(\vx) \geq t$ for all feasible solutions to $P(n,k,\genpos,\genneg)$.
\end{observation}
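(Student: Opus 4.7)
The plan is to show that a feasible solution $\vx$ of $P(n,k,\genpos,\genneg)$ necessarily has $\cutpos_\vx \supseteq \cL_k(\genpos)$, which by counting immediately yields $s_k(\vx) \geq |\cL_k(\genpos)| \geq t$. The machinery for this was already established in Section~\ref{sec:poset}, so the argument reduces to a short chain of implications.

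First, I would unpack what feasibility gives us. A feasible $\vx$ satisfies $\sum_{i=1}^n x_i \geq 0$ together with $x_i \geq x_{i+1}$ for $1 \leq i \leq n-1$, so $\vx \in F_n$. Additionally, $\sigma_S(\vx) = \sum_{i \in S} x_i \geq 0$ for every $S \in \genpos$, which places each such $S$ into $\cutpos_\vx$.

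The key step is to propagate this positivity up the shift poset. Fix any $S' \in \cL_k(\genpos)$; by definition, $S' \succeq S$ for some $S \in \genpos$. Because $\vx \in F_n$ has a nonincreasing coordinate sequence, the order-preserving property of the shift poset established earlier in Section~\ref{sec:poset} gives $\sigma_{S'}(\vx) \geq \sigma_S(\vx) \geq 0$. Hence $S' \in \cutpos_\vx$, and since $S'$ was arbitrary, $\cL_k(\genpos) \subseteq \cutpos_\vx$.

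Taking cardinalities yields $s_k(\vx) = |\cutpos_\vx| \geq |\cL_k(\genpos)| \geq t$, as required. There is no real obstacle here: the statement is essentially a direct corollary of the two facts that (i) feasibility forces the $k$-sums indexed by $\genpos$ to be nonnegative, and (ii) the shift poset is compatible with the ordering of coordinates in $F_n$, so nonnegativity at a $k$-set is inherited by every $k$-set to its left. Indeed, this is why the authors label the statement an \emph{observation} rather than a lemma.
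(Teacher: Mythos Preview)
Your proof is correct and follows exactly the reasoning implicit in the paper: the paper states no proof for this observation, but immediately before it notes that a vector $\vx \in F_n$ has $\cutpos_{\vx} \supseteq \cL_k(\genpos)$ if and only if a scalar multiple of $\vx$ is feasible in $P(n,k,\genpos,\genneg)$, which is precisely the inclusion you establish and then count.
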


By searching for sets $\genpos, \genneg$ such that a solution to $P(n,k,\genpos,\genneg)$ has $s_k(\vx) < t$, we either find these vectors or determine that none exist.
Thus, determining $g(n,k)$ is a finite problem, and by Lemma~\ref{lma:remainders} determining $f(k)$ is a finite problem.

\subsection{The Search Strategy}

Fix $n$, $k$, and $t \leq \binom{n-1}{k-1}$.
We will search for sets $\genpos$ and $\genneg$ such that the solution $\vx$ to $P(n,k,\genpos,\genneg)$ has $s_k(\vx) < t$.
If $\cL_k(\genpos) \cup \cR_k(\genneg)$ is a partition of $\nCk$ with $|\cL_k(\genpos)| < t$, then every feasible solution to $P(n,k,\genpos,\genneg)$ is $(n,k,t)$-bad.
The reason we use the objective of minimizing $x_1$ in the linear program is to try and distance the optimal solution from the conjectured sharp example of $x_1$ being a large positive value and $x_2,\dots,x_n$ being small negative values.
In practice, when an $(n,k,t)$-bad vector exists we discover it before $\cL_k(\genpos)$ and $\cR_k(\genneg)$ partition $\nCk$.

During the algorithm, we will store two collections of $k$-sets, $\branchpos$ and $\branchneg$,  called \emph{branch sets}.
We initialize $\branchpos = \branchneg = \varnothing$ and always $\branchpos \subseteq \genpos$ and $\branchneg \subseteq \genneg$.
The difference between $\branchneg$ and $\genneg$, for instance, is that the sets in $\branchneg$ are \emph{chosen} to have negative sum, but the sets in $\genneg \setminus \branchneg$ are sets that \emph{must} have negative sum for all $(n,k,t)$-bad vectors $\vx$ with $\cutpos_\vx \supseteq \branchpos$ and $\cutneg_\vx\supseteq \branchneg$.

The search procedure \BranchAndCut\ (Algorithm~\ref{alg:formulationiirecurse}) is recursive, taking parameters $n, k, t, \branchpos, \branchneg, \cutstar$ under the requirement that $\cL_k(\branchpos) \cup \cR_k(\branchneg) \cup \cutstar$ is a partition of $\nCk$.
The collection $\cutstar = \nCk \setminus (\cL_k(\branchpos) \cup \cR_k(\branchneg))$ contains the $k$-sets $S$ where $\sigma_S(\vx)$ is not immediately decided by the constraints on $\branchpos$ and $\branchneg$.

We will refer to each recursive call to \BranchAndCut\ as a \emph{search node}, and at each search node we perform the following three actions:

\begin{enumerate}
	\item Determine the $\succeq$-maximal sets $S \in \cutstar$ such that all vectors $\vx$ with $\cutpos_\vx \supseteq \branchpos \cup \{S\}$ have $s_k(\vx) \geq t$ and add these sets to $\genneg$.
	\item Test that the linear program $P(n,k,\branchpos,\genneg)$ is feasible (prune if infeasible).
	\item Select a set $S \in \cutstar$ and branch on the choice of placing $S$ in $\branchpos$ or $\branchneg$, creating two new search nodes.
\end{enumerate}

\begin{algorithm}[tp]
\begin{algorithmic}
	\REQUIRE $\cL_k(\branchpos)$, $\cR_k(\branchneg)$, and $\cutstar$ partition $\nCk$.
	\ENSURE Returns an $(n,k,t)$-bad vector $\vx$ with $\cutpos_\vx \supseteq \branchpos$ and $\cutneg_{\vx}\supseteq \branchneg$ if it exists.
	\IF{$|\cL(\branchpos)| \geq t$}
		\RETURN $\Null$
	\ENDIF
	\STATE $\genpos, \genneg, \cutstar \leftarrow$ \PropagateNegative$(n,k,t,\branchpos,\branchneg,\cutstar)$\qquad(\emph{See Algorithm \ref{alg:propagation}})
	\STATE Find optimal solution $\vx$ to $P(n,k,\genpos,\genneg)$
	\IF{$\vx \equiv \Null$ \textbf{or} $s_k(\vx) < t$}
		\STATE \emph{The linear program is infeasible or found a counterexample.}
		\RETURN $\vx$
	\ENDIF
	\STATE $S \leftarrow$ \SelectBranchSet$(\genpos, \genneg, \cutstar)$
	\STATE $\branchpos_1  \leftarrow \branchpos$,\quad $\branchneg_1  \leftarrow \branchneg \cup \{ S \}$,\quad $\cutstar_1  \leftarrow \cutstar_0 \setminus \cR_k(S)$
	\STATE $\vx \leftarrow$ \BranchAndCut$(n, k, t, \branchpos_1, \branchneg_1, \cutstar_1)$
	\IF{$\vx \equiv \Null$}
	\STATE $\branchpos_2 \leftarrow \branchpos \cup \{ S \}$,\quad $\branchneg_2 \leftarrow \branchneg$,\quad $\cutstar_2  \leftarrow \cutstar \setminus \cL_k(S)$
	\STATE $\vx \leftarrow$ \BranchAndCut$(n,k,t,\branchpos_2, \branchneg_2, \cutstar_2)$
	\ENDIF
	\RETURN $\vx$
\end{algorithmic}
\caption{\label{alg:formulationiirecurse}The recursive algorithm \BranchAndCut$(n, k, t, \branchpos, \branchneg, \cutstar)$.}
\end{algorithm}

One crucial step for the first action performed at each node is computing the number of $k$-sets in $\cutstar$ that are also to the left of a set $S$ or to the right of a set $S$.
Define the functions $L_k^*(S) = |\cL_k(S) \setminus \cL_k(\genpos)|$ and $R_k^*(S) = |\cR_k(S) \setminus \cR_k(\genneg)|$.

The algorithm \SelectBranchSet\ is a method for selecting a set $S \in \cutstar$ for the branching step is called a \emph{branching rule}.
A good branching rule can significantly reduce the number of visited search nodes.
However, it is difficult to predict which set is the best choice.
Any branching rule that selects a $k$-set $S$ from $\cutstar$ will provide a \emph{correct} algorithm,  but these choices can greatly change the size and structure of the search tree.
Based on our experiments and choices of several branching rules, we found selecting a $k$-set $S$ that maximizes $\min\{L_k^*(S), R_k^*(S)\}$ was most effective.
This rule ensured that both branches removed as many sets from $\cutstar$ as possible.

In Section~\ref{sec:propagation}, we define the algorithm \PropagateNegative\ (Algorithm~\ref{alg:propagation}).
Before that, we must discuss how to efficiently compute $L_k^*(S)$ and $R_k^*(S)$.


\subsection{Computing Intersections with $\cutstar$}\label{sec:starfunctions}

For this discussion, fix two families $\genpos$ and $\genneg$ with $\cutstar = \nCk \setminus (\cL_k(\genpos) \cup \cR_k(\genneg))$.
We will use three methods to compute $|\cL_k(S) \cap \cutstar|$ and $|\cR_k(S) \cap \cutstar|$: breadth-first-search, inclusion-exclusion, and iterative updates.
In all techniques, we will store markers for whether a set $T$ is in $\cL_k(\genpos)$, $\cR_k(\genneg)$, or $\cutstar$.

\vspace{0.5em}
\noindent{\bf Breadth-First-Search.} 
When $k$ is small, it is reasonable to store the Hasse digraph (an edge $S \leftarrow T$ exists if $S \succeq T$ is a cover relation) of the shift poset in memory and use breadth-first-search to count the number of sets to the left of $S$ which are in $\cutstar$.

\vspace{0.5em}
\noindent{\bf Inclusion-Exclusion.}
We can use the lattice structure of the shift poset to compute the functions $L_k^*(S)$ and $R_k^*(S)$.
First, we compute the values of $L_k^*(S)$ with increasing colex rank and values of $R_k^*(S)$ with decreasing colex rank.
With this order, we have access to $L_k^*(T)$ whenever $T \succeq S$ or $R_k^*(T)$ whenever $S \succeq T$.
Let $\cF_L(S)$ be the collection of sets $T$ that cover $S$, and let $\cF_R(S)$ be the collection of sets $T$ that $S$ covers.
Then, 
\begin{align*}
	L_k^*(S) &= 1 + \sum_{\emptyset \neq \cA \subseteq \cF_L(S)} (-1)^{|\cA|+1} L_k^*(\join \cA)\\
	R_k^*(S) &= 1 + \sum_{\emptyset \neq \cA \subseteq \cF_R(S)} (-1)^{|\cA|+1} R_k^*(\meet \cA)
\end{align*}

When $k \leq 4$, the breadth-first-search strategy is faster than the inclusion-exclusion strategy.
However, for $k \geq 6$, the inclusion-exclusion strategy is the only method that is tractable.
For $k = 5$, the two strategies are too similar to demonstrate a clear preference.

As $k$ grows, the number of terms of the inclusion-exclusion sum grows exponentially, leading to a large amount of computation required for every set $S \in \cutstar$.
Our next method computes the values of $L_k^*(S)$ when a small change has been made to $\branchpos$ using a smaller amount of computation for every set $S \in \cutstar$.

\vspace{0.5em}
\noindent{\bf Updates with $\branchpos$.}
Observe that sets are added to $\genpos$ only when placing a set into $\branchpos$.
Therefore, there are many fewer sets in $\genpos$ than in $\genneg$.
Further, if the most recent branch selected a $k$-sum to be negative, then the values $L_k^*(S)$ did not change for any sets still in $\cutstar$.
Therefore, we need only update the values of $L_k^*(T)$ for $T \in \cutstar$ after branching on a set $S$ and placing $S$ in $\branchpos$.
For all such $T$, observe that
\[
	L_k^*(T) \leftarrow L_k^*(T) - L_k^*(T \join S)
\]
properly updates the value of $L_k^*(T)$ when adding the set $S$ to $\branchpos$. 
Specifically, the sets that are removed from $\cL_k(T) \setminus \cL(\genpos)$ to $\cL_k(T) \setminus \cL_k(\genpos \cup \{S\})$ are exactly those to the left of both $T$ and $S$ but not to the left of any elements in $\genpos$.
Such $k$-sets are exactly those in $\cL_k(T\join S) \setminus \cL(\genpos)$, which are counted by $L_k^*(T \join S)$.

This method is not efficient for computing $R_k^*(S)$, since too many sets are being added to $\genneg$ during the propagation step (see the next section). 
Thus, we use this method to update $L_k^*(S)$ only when a single set has been added to $\branchpos$.
If we need to recompute all values of $L_k^*(S)$ after a significant change to $\branchpos$, we use the inclusion-exclusion method.

\subsection{Propagation}\label{sec:propagation}

Given a set of branch sets $\branchpos$ and $\branchneg$, we want to determine which sets $S \in \cutstar$ must have $\sigma_S(\vx) < 0$ for all $(n,k,t)$-bad vectors $\vx$ which are feasible in $P(n,k,\genpos,\genneg)$.

Recall that by Observation~\ref{obs:forcenegative} and Lemma~\ref{lma:baranyaitrick}, any set $S$ with $L_k(S) \geq t$, or $1 \in S$ and $L_k(S) + g(n-k,k) \geq t$, has $\sigma_S(\vx) < 0$ for any $(n,k,t)$-bad vector $\vx$.
This applies regardless of the previous choices of sets placed in $\branchpos$.
When $\branchpos$ is non-empty and we have access to the function $L_k^*(S)$, we may be able to find more sets where $\sigma_S(\vx) < 0$ for any $(n,k,t)$-bad vector $\vx$ with $\cutpos_\vx \supseteq \branchpos$.

\begin{lemma}\label{lma:propagatenegative}
	If $S$ is a $k$-set with $L_k^*(S) + |\cL_k(\branchpos)| \geq t$, then all feasible solutions $\vx$ to $P(n, k, \branchpos\cup\{S\}, \branchneg)$ have $s_k(\vx) \geq t$.
\end{lemma}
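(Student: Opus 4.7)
The plan is a direct counting argument using the monotonicity of $\sigma_S(\cdot)$ on the shift poset together with the definition of $L_k^*(S)$.

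First, I would fix a feasible solution $\vx$ to $P(n,k,\branchpos\cup\{S\},\branchneg)$. The ordering constraints $x_i \geq x_{i+1}$ and nonnegativity of $\sum_i x_i$ put $\vx$ (up to scaling) in $F_n$, and the linear program enforces $\sigma_T(\vx) \geq 0$ for every $T \in \branchpos \cup \{S\}$. Because $\vx$ has nonincreasing coordinates, the earlier observation that $T \succeq T'$ implies $\sigma_T(\vx) \geq \sigma_{T'}(\vx)$ extends this to give $\sigma_{T'}(\vx) \geq 0$ for every $T' \in \cL_k(\branchpos) \cup \cL_k(S)$.

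Next I would count: $s_k(\vx) = |\cutpos_\vx| \geq \bigl|\cL_k(\branchpos) \cup \cL_k(S)\bigr|$. By inclusion–exclusion,
\[
\bigl|\cL_k(\branchpos) \cup \cL_k(S)\bigr|
 = |\cL_k(\branchpos)| + \bigl|\cL_k(S) \setminus \cL_k(\branchpos)\bigr|
 = |\cL_k(\branchpos)| + L_k^*(S),
\]
where the second equality is just the definition of $L_k^*(S)$ in the current context (with $\genpos = \branchpos$). Applying the hypothesis $L_k^*(S) + |\cL_k(\branchpos)| \geq t$ then gives $s_k(\vx) \geq t$, as required.

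There is no genuine obstacle in this argument; the only subtlety worth flagging is a notational one, namely to be explicit that $L_k^*(S)$ in this propagation step is being evaluated with respect to $\genpos = \branchpos$ (so that $L_k^*(S)$ counts sets to the left of $S$ that are not already forced nonnegative by the branch set $\branchpos$). Once that is pinned down, the lemma follows immediately from poset monotonicity and a one-line union bound, and the role of $\branchneg$ is vacuous—it only affects feasibility, not the count of nonnegative $k$-sums.
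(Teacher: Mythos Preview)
Your argument is correct and is essentially identical to the paper's own proof: both show $\cutpos_\vx \supseteq \cL_k(\branchpos)\cup\cL_k(S)$ by poset monotonicity and then compute $|\cL_k(\branchpos)\cup\cL_k(S)| = |\cL_k(\branchpos)| + L_k^*(S) \geq t$. Your remark that $L_k^*(S)$ here is taken with respect to $\genpos=\branchpos$ is exactly the point, and the paper uses this implicitly in the same way.
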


\begin{proof}
Observe that such vectors $\vx$ have $\cutpos_{\vx} \supseteq \cL_k(\branchpos) \cup \cL_k(S)$ and $|\cL_k(\branchpos)\cup \cL_k(S)| = |\cL_k(\branchpos)| + |\cL_k(S)\setminus \cL_k(\branchpos)| = |\cL_k(\branchpos)| + L_k^*(S) \geq t$.
\end{proof}

The \PropagateNegative\ algorithm (Algorithm~\ref{alg:propagation}) iterates on all sets $S$ in $\cutstar$ using lexicographic order and whenever a set $S$ satisfies the hypotheses of Lemma~\ref{lma:baranyaitrick} or Lemma~\ref{lma:propagatenegative} the set $S$ is added to $\genneg$.
Since all sets $T \succeq S$ have $T \leq_{\text{lex}} S$, the set $T$ was considered before $S$ and did not satisfy the condition for addition to $\genneg$.
Hence, we construct $\genneg$ as a $\succeq$-maximal set (other than possible comparisons within $\branchneg$).

\begin{algorithm}[tp]
\begin{algorithmic}
\REQUIRE $\cL_k(\branchpos),\cR_k(\branchneg)$, and $\cutstar$ partition $\nCk$ and $|\cL_k(\branchpos)| < t$.
\ENSURE Returns $\branchpos$, $\genneg_0$, and $\cutstar_0$ such that any $(n,k,t)$-bad vector $\vx$ with $\cutpos_\vx \supseteq \branchpos$ and $\cutneg_\vx \supseteq \branchneg$ also has $\cutneg_\vx \supseteq \genneg_0$, and $\cutstar_0 = \nCk \setminus (\cL_k(\branchpos)\cup \cR_k(\genneg_0))$.

\vspace{1em}
\STATE $\genneg_0 \leftarrow \branchneg$,\quad $\cutstar_0 \leftarrow \cutstar$

\FOR{{\bf all} sets $S \in \cutstar_0$ (in lex order)}
	\IF{$1 \in S$ \textbf{and} $L_k(S) + g(n-k,k) \geq t$}
		\STATE $\genneg_0 \leftarrow \genneg_0 \cup \{S\}$
		\STATE $\cutstar_0 \leftarrow \cutstar_0 \setminus \cR_k(S)$
	\ENDIF
	\IF{$L_k^*(S) + |\cL_k(\branchpos)| \geq t$}
		\STATE $\genneg_0 \leftarrow \genneg_0 \cup \{S\}$
		\STATE $\cutstar_0 \leftarrow \cutstar_0 \setminus \cR_k(S)$
	\ENDIF
\ENDFOR

\RETURN $\branchpos, \genneg_0, \cutstar_0$
\end{algorithmic}
\caption{\label{alg:propagation}\PropagateNegative$(n, k, t, \branchpos, \branchneg,\cutstar)$}
\end{algorithm}

\subsection{Results Using \BranchAndCut}

Using \BranchAndCut\ (Algorithm~\ref{alg:formulationiirecurse}), we computed $g(n,k)$ for $k \in \{4,5,6\}$ and $k < n \leq 4k+1$, and we verified that $f(k) = 3k+2$ for $k \in \{4,5,6\}$.

\begin{theorem}\label{thm:branching}
	$f(4) = 14$, $f(5) = 17$, and $f(6) = 20$.
\end{theorem}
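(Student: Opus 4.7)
The plan is to combine three ingredients: an explicit construction for the lower bound, Chowdhury's residue lemma (Lemma~\ref{lma:remainders}) to reduce the upper bound to finitely many cases, and the \BranchAndCut\ procedure (Algorithm~\ref{alg:formulationiirecurse}) to dispatch each case.

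For the lower bound $f(k) \geq 3k+2$ with $k \in \{4,5,6\}$, I take $n = 3k+1$ and consider the vector $3^{n-3}\,(-(n-3))^3 \in F_n$, which has sum zero. A $k$-sum using $j$ entries equal to $3$ and $k-j$ entries equal to $-(n-3)$ equals $3j - (k-j)(n-3)$, which at $n = 3k+1$ is strictly negative whenever $j < k$; hence exactly $\binom{n-3}{k} = \binom{3k-2}{k}$ of the $k$-sums are nonnegative. One checks $\binom{10}{4} = 210 < 220 = \binom{12}{3}$, $\binom{13}{5} = 1287 < 1365 = \binom{15}{4}$, and $\binom{16}{6} = 8008 < 8568 = \binom{18}{5}$, so $g(3k+1,k) < \binom{3k}{k-1}$ and thus $f(k) > 3k+1$.

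For the upper bound $f(k) \leq 3k+2$, Lemma~\ref{lma:remainders} reduces the task to checking that $g(n,k) = \binom{n-1}{k-1}$ on the window of $k$ consecutive values $n \in \{3k+2, 3k+3, \ldots, 4k+1\}$, since every larger $n$ can be reached by subtracting multiples of $k$. For each such $(n,k)$ I invoke \BranchAndCut$(n, k, \binom{n-1}{k-1}, \varnothing, \varnothing, \nCk)$. By Observations~\ref{obs:completeness}, \ref{obs:pruning}, and~\ref{obs:propagate}, the output is either an $(n,k,\binom{n-1}{k-1})$-bad vector (which would refute Conjecture~\ref{conj:mms}) or \Null, in which case $g(n,k) \geq \binom{n-1}{k-1}$, with equality following from the sharpness example $(n-1)^1 (-1)^{n-1}$. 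I arrange the runs in increasing order of $n$ so that the auxiliary values $g(n-k,k)$ needed to trigger Lemma~\ref{lma:baranyaitrick} inside \PropagateNegative\ are always available from earlier calls.

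The main obstacle is purely computational: keeping the search tree manageable. Even for $k = 6$ and $n = 20$, the shift poset has $\binom{20}{6} = 38760$ elements, so every node of \BranchAndCut\ must prune aggressively via the linear program $P(n,k,\genpos,\genneg)$, via the Lemma~\ref{lma:baranyaitrick}-style propagation built into Algorithm~\ref{alg:propagation}, and via the branching rule that maximizes $\min\{L_k^*(S), R_k^*(S)\}$. The $L_k^*,R_k^*$ bookkeeping of Section~\ref{sec:starfunctions} -- breadth-first search for small $k$, inclusion-exclusion and incremental \emph{join}-updates for larger $k$ -- is what makes these searches terminate in practice. Once every search in the finite grid $\{(n,k) : k \in \{4,5,6\},\ 3k+2 \leq n \leq 4k+1\}$ returns \Null, combining with the lower bound yields $f(4) = 14$, $f(5) = 17$, and $f(6) = 20$.
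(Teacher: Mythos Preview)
Your approach matches the paper's: explicit lower bound from the $3^{n-3}\,(-(n-3))^3$ vector, Lemma~\ref{lma:remainders} to reduce to a finite window, and \BranchAndCut\ to handle each case. There is, however, one concrete slip. You write that the auxiliary values $g(n-k,k)$ needed by Lemma~\ref{lma:baranyaitrick} inside \PropagateNegative\ ``are always available from earlier calls,'' but your earlier calls only run over $n \in \{3k+2,\dots,4k+1\}$, whereas the arguments $n-k$ lie in $\{2k+2,\dots,3k+1\}$, which is disjoint from your window. So none of those values have actually been computed when you need them.

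The paper closes this gap by first running \BranchAndCut$(n,k,t,\varnothing,\varnothing,\nCk)$ for \emph{every} $n \le 3k+1$, with $t$ chosen as the minimum $s_k$ over the family $a^b\,(-b)^a$, thereby determining $g(n,k)$ exactly on that range before entering the window $\{3k+2,\dots,4k+1\}$. This both supplies the values Lemma~\ref{lma:baranyaitrick} requires and, incidentally, absorbs your lower-bound argument as a special case. Without those preliminary computations (or at least some valid lower bound substituted for $g(n-k,k)$ in Algorithm~\ref{alg:propagation}), \PropagateNegative\ cannot be executed as described, and the search you report may not terminate in the times claimed. The fix is exactly what the paper does: extend the range of the computation down to $n = k+1$.
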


\begin{proof}[Proof outline.]
	By Lemma~\ref{lma:remainders}, we must only verify the values $n$ where $f(k) \leq n \leq f(k) + k - 1$.
	For all $n \leq 3k+1$, we tested all sums $a + b = n$ and found $s_k(\vx)$ for the vector $x_1 = \cdots = x_b = a$, $x_{b+1} = \cdots = x_n = -b$.
	We let $t$ be the minimum such value $s_k(\vx)$ and executed \BranchAndCut$(n,k,t,\varnothing,\varnothing,\nCk)$, which found no vector with fewer than $t$ nonnegative $k$-sums.
	By executing these algorithms in increasing value of $n$, we have access to $g(n-k,k)$ during execution of  \PropagateNegative($n,k,t,\branchpos,\branchneg,\cutstar$) (Algorithm~\ref{alg:propagation}) when testing $g(n,k) \geq t$ for $n > 2k$.
	Finally, for $n \in \{3k+2,\dots,4k+1\}$, we find $g(n,k) \geq \binom{n-1}{k-1}$ by executing \BranchAndCut$(n,k,\binom{n-1}{k-1},\varnothing,\varnothing,\nCk)$.
\end{proof}

For $k = 7$, \BranchAndCut\ succeeded in computing $g(n,k)$ for $n \leq 23$, but only after resorting to parallel computation.
Previous computations showed an order of magnitude jump in computation time between $n = f(k) - 1$ and $n = f(k)$, so using \BranchAndCut\ to verity $\hat g(24,7) = 0$ seemed intractable.

In the next section, we develop a zero-error randomized propagation algorithm which improves the performance enough for us to compute $f(7)$.

\section{Propagation With Randomness}\label{sec:branchless}

When branching in our branch-and-cut method, we select a set $S$ in $\cutstar$ and make a temporary decision of $\sigma_S(\vx) \leq -1$ or $\sigma_S(\vx) \geq 0$.
Then during our propagation step, the \PropagateNegative\ algorithm (Algorithm~\ref{alg:propagation}) places as many left-most sets into $\genneg$ as possible using Lemmas~\ref{lma:baranyaitrick} and \ref{lma:propagatenegative}.
We now describe a condition that allows us to add sets into $\genpos$ without branching.

Given a set $S$ in $\cutstar$, we can test the linear program for feasibility when $S$ is added to $\genneg$ or $\genpos$.

\begin{observation}\label{lma:propagatepositive}
	If $S$ is a $k$-set where $P(n, k, \genpos, \genneg\cup\{S\})$ is infeasible, then all vectors that are feasible solutions to $P(n, k, \genpos, \genneg)$ are also feasible solutions to $P(n,k,\genpos \cup \{S\}, \genneg)$.
\end{observation}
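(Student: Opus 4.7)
The plan is to argue by contradiction: assume $\vx$ is feasible for $P(n,k,\genpos,\genneg)$ but $\sum_{i\in S} x_i < 0$, and then exhibit a scalar multiple $\lambda \vx$ that is feasible for $P(n,k,\genpos,\genneg\cup\{S\})$, contradicting the hypothesized infeasibility.

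First, I would note that every constraint in the LP other than those of the form $\sum_{i\in T} x_i \leq -1$ is homogeneous with right-hand side zero: the global sum constraint, the monotonicity constraints $x_i-x_{i+1}\geq 0$, and the positivity constraints for $U \in \genpos$ are all preserved under scaling by any nonnegative $\lambda$. The only constraints whose preservation under scaling requires care are the ``negative-sum'' constraints $\sum_{i\in T} x_i \leq -1$ for $T \in \genneg \cup \{S\}$, and these are preserved as long as $\lambda \geq 1$ (since $\lambda \cdot (-1) \leq -1$).

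Next, writing $\sum_{i\in S} x_i = -c$ with $c > 0$, I would choose $\lambda = \max(1,\,1/c)$. This gives $\lambda \geq 1$, so all the already-satisfied constraints of $P(n,k,\genpos,\genneg)$ remain satisfied by $\lambda \vx$. Moreover, $\lambda c \geq 1$, so $\lambda \sum_{i\in S} x_i = -\lambda c \leq -1$, meaning $\lambda \vx$ satisfies the additional constraint that defines $P(n,k,\genpos,\genneg\cup\{S\})$. Hence $\lambda \vx$ is feasible for that LP, contradicting infeasibility.

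The argument is essentially a one-line scaling observation, so there is no real obstacle; the only subtlety worth spelling out is the two-cases choice of $\lambda$ (using $\lambda=1$ when $c \geq 1$ and $\lambda = 1/c$ when $c < 1$) to make sure we simultaneously meet the ``$\leq -1$'' threshold on $S$ and preserve the existing ``$\leq -1$'' constraints for $\genneg$. With $\sum_{i\in S} x_i \geq 0$ forced, $\vx$ trivially satisfies every constraint of $P(n,k,\genpos \cup \{S\}, \genneg)$, completing the proof.
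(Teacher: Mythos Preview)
Your argument is correct. The paper states this result as an ``observation'' without proof, but your scaling argument is exactly the justification the authors have in mind: just after Definition~\ref{def:lp} they remark that a vector $\vx \in F_n$ has $\cutpos_{\vx} \supseteq \cL_k(\genpos)$ and $\cutneg_{\vx} \supseteq \cR_k(\genneg)$ if and only if an appropriate scalar multiple of $\vx$ is feasible for $P(n,k,\genpos,\genneg)$, which is precisely the fact you exploit.
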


\begin{observation}\label{lma:propagatenegativeLP}
	If $S$ is a $k$-set where $P(n, k, \genpos \cup \{S\}, \genneg)$ is infeasible, then all vectors that are feasible solutions to $P(n, k, \genpos, \genneg)$ are also feasible solutions to $P(n,k,\genpos, \genneg\cup \{S\})$.
\end{observation}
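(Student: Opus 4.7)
The plan is a short two-step argument. Fix a feasible solution $\vx$ of $P(n,k,\genpos,\genneg)$. My first step is to establish that $\sigma_S(\vx) < 0$. Indeed, the only constraint of $P(n,k,\genpos\cup\{S\},\genneg)$ that does not already appear in $P(n,k,\genpos,\genneg)$ is $\sigma_S(\vx)\geq 0$. Thus if $\sigma_S(\vx)\geq 0$ held, then $\vx$ itself would be feasible for $P(n,k,\genpos\cup\{S\},\genneg)$, contradicting the infeasibility hypothesis. Hence $\sigma_S(\vx)<0$.

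My second step is to rescale $\vx$ so that it becomes feasible for $P(n,k,\genpos,\genneg\cup\{S\})$. Take the positive scalar $c=\max\{1,\,-1/\sigma_S(\vx)\}\geq 1$ and let $\vx'=c\vx$. The three homogeneous constraint families $\sum_i x_i'\geq 0$, $x_i'\geq x_{i+1}'$, and $\sigma_{S'}(\vx')\geq 0$ for $S'\in\genpos$ are preserved because $c>0$. For each $T\in\genneg$ I compute $\sigma_T(\vx')=c\,\sigma_T(\vx)\leq -c\leq -1$, using $c\geq 1$ together with $\sigma_T(\vx)\leq -1$. Finally $\sigma_S(\vx')=c\,\sigma_S(\vx)\leq -1$ by the choice of $c$, so $\vx'$ is feasible for $P(n,k,\genpos,\genneg\cup\{S\})$.

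The only point requiring care is that the literal statement identifies a feasible point of $P(n,k,\genpos,\genneg)$ with one of $P(n,k,\genpos,\genneg\cup\{S\})$, which fails when $-1<\sigma_S(\vx)<0$. The $-1$ in the LP is a normalization for a sign-pattern cone, and the rescaling in step two is exactly what is needed to restore this normalization while preserving every other inequality. Apart from this bookkeeping the argument is a direct mirror of the proof of Observation~\ref{lma:propagatepositive}, and I expect the rescaling check to be the only substantive step.
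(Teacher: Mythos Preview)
The paper offers no proof: both this statement and its companion Observation~\ref{lma:propagatepositive} are left as bare observations. Your two-step argument is correct, and in fact you have caught a wrinkle the paper glosses over. As literally stated the conclusion can fail, since a feasible $\vx$ for $P(n,k,\genpos,\genneg)$ may satisfy $-1<\sigma_S(\vx)<0$ and hence violate the constraint $\sigma_S(\vx)\le -1$ required by $P(n,k,\genpos,\genneg\cup\{S\})$. Your rescaling by $c=\max\{1,\,-1/\sigma_S(\vx)\}$ is exactly the right repair, and it is consistent with the paper's own remark (just after Definition~\ref{def:lp}) that feasibility of $P$ is meant only up to a positive scalar multiple. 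The operative content of the observation---that every feasible $\vx$ has $\sigma_S(\vx)<0$, so $S\in\cutneg_\vx$---is precisely your step one, and that is all the algorithm actually needs.
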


Given these two lemmas, we could test every set in $\cutpos$ and determine which sets should be added to $\genpos$ or $\genneg$.
The \PropagatePositive\ algorithm (Algorithm~\ref{alg:propagatepos}) places as many right-most sets into $\genpos$ as possible using Lemma~\ref{lma:propagatepositive}.

\begin{algorithm}[tp]
\begin{algorithmic}
\REQUIRE $\cL_k(\genpos),\cR_k(\genneg)$, and $\cutstar$ partition $\nCk$ and $|\cL_k(\genpos)| < t$.
\ENSURE Returns $\genpos_0$, $\genneg_0$, and $\cutstar_0$ such that any $(n,k,t)$-bad vector $\vx$ with $\cutpos_\vx \supseteq \genpos$ and $\cutneg_\vx \supseteq \genneg$ also has $\cutpos_\vx \supseteq \genpos_0$, $\genneg_0\equiv \genneg$, and $\cutstar_0 = \nCk \setminus (\cL_k(\genpos_0)\cup \cR_k(\genneg_0))$.

\vspace{0.5em}

\STATE $\genpos_0, \genneg_0, \cutstar_0 \leftarrow$ \PropagateNegative$(n,k,t,\genpos,\genneg,\cutstar)$

\STATE updated $\leftarrow \True$
\WHILE{updated}
	\STATE update $\leftarrow \False$
\FOR{{\bf all} sets $S \in \cutstar_0$ (in reverse lex order)}
	\IF{$\cP(n,k,\genpos, \genneg\cup \{S\})$ is infeasible}
		\STATE update $\leftarrow \True$	
		\STATE $\genpos_0 \leftarrow \genpos_0\cup \{S\}$.
		\STATE $\cutstar_0 \leftarrow \cutstar_0 \setminus \cL_k(S)$
		\STATE $\genpos_0, \genneg_0, \cutstar_0 \leftarrow$ \PropagateNegative$(n,k,t,\genpos_0,\genneg_0,\cutstar_0)$
	\ENDIF
\ENDFOR
\ENDWHILE

\RETURN $\genpos_0, \genneg_0, \cutstar_0$
\end{algorithmic}
\caption{\label{alg:propagatepos}\PropagatePositive$(n,k,t,\genpos,\genneg,\cutstar)$}
\end{algorithm}

Since \PropagatePositive\ optimizes a linear program for every set in $\cutstar$, this algorithm is too slow to be effective within \BranchAndCut.
However, since \PropagatePositive\ includes a call to \PropagateNegative\ after every addition to $\branchpos$, it is possible that a single call to \PropagatePositive\ results with the linear program $\cP(n,k,\genpos_0,\genneg_0)$ infeasible.
Since \PropagateNegative\ adds negative sets to $\genneg$, there are likely more sets $S \in \cutstar$ where adding $S$ to $\genneg$ leads to an infeasible linear program.
Such sets are added to $\genpos$ by \PropagatePositive, leading to more sets which satisfy the conditions in \PropagateNegative.

Using \PropagatePositive\ in place of \PropagateNegative\ in the propagation step of \BranchAndCut\ is slow for even $k = 5$.
However, for $k = 3$ and $k = 4$, using \PropagatePositive\ terminates in at most three iterations of the loop and requires no branching.
The log of this computation is small enough that we present the computation as proofs that $f(3) \leq 11$ and $f(4) \leq 14$ in Appendices~\ref{apx:k3} and \ref{apx:k4}.

While \PropagatePositive\ is unreasonably slow, \PropagateNegative\ is very fast to test since we have very quick methods for computing $L_k^*(S)$.
If we can find just one set $S$ where adding $S$ to $\genneg$ creates an infeasible linear program, then we can add $S$ to $\branchpos$ and likely the next iteration of \PropagateNegative\ will add more negative sets.

Instead of carefully selecting a set $S$ to peform this test, we simply select a set $S$ in $\cutstar$ uniformly at random.
By randomly selecting sets and testing the linear program, we may quickly find a set that we can guarantee to be in $\cutpos_\vx$ for any $(n,k,t)$-bad vector $\vx \in F_n$.
This is the idea for the algorithm \StochasticPropagation\ (Algorithm~\ref{alg:stochastic}).
Simply, we select a set from $\cutstar$ at random and test if the set fits the hypotheses of Observations~\ref{lma:propagatepositive} or \ref{lma:propagatenegativeLP}.
We continue sampling until either (a) we find such a set and add it to $\genpos$ or $\genneg$, (b) we sample a specified number of sets which all fail these conditions, or (c) we reach a specified time limit.
The sampling limits of (b) and (c) are not listed in Algorithm~\ref{alg:stochastic}, but are both parameters which can be modified in our implementation.

\begin{algorithm}[tp]
	\caption{\label{alg:stochastic}\StochasticPropagation$(n, k, t, \genpos, \genneg, \cutstar)$ --- Randomly test sets for inclusion in $\genneg$ and $\genpos$.}
	\begin{algorithmic}
	\REQUIRE $k \geq 3$, $n \geq k$, $t \leq \binom{n-1}{k-1}$, and $\branchpos, \branchneg \subset \nCk$.
	\ENSURE Sets added to $\genpos$ or $\genneg$ satisfy the hypotheses of Observations~\ref{lma:propagatepositive} or \ref{lma:propagatenegativeLP}.
	
	\LOOP
			\STATE $\genpos, \genneg, \cutstar \leftarrow$ \PropagateNegative$(n, k, t, \genpos, \genneg, \cutstar)$\qquad(\emph{see Algorithm~\ref{alg:propagation}})
		
			\STATE updated $\leftarrow$ False
			\WHILE{\textbf{not} updated}
				\STATE Randomly select a set $S \in \cutstar$
				\IF{$\cP(n, k, \genpos, \genneg \cup \{S\})$ is infeasible}
					\STATE $\genpos \leftarrow \genpos \cup \{S\}$,\quad $\cutstar \leftarrow \cutstar \setminus \cL_k(S)$,\quad updated $\leftarrow$ True
			\IF{$|\cL_k(\genpos)|\geq t$}
				\RETURN $\genpos, \genneg, \cutstar$
			\ENDIF
				\ELSIF{$\cP(n, k, \genpos \cup \{S\}, \genneg)$ is infeasible}
					\STATE $\genneg \leftarrow \genneg \cup \{S\}$,\quad $\cutstar \leftarrow \cutstar \setminus \cR_k(S)$,\quad updated $\leftarrow$ True				
				\ENDIF
			\ENDWHILE
		\ENDLOOP
	
	\RETURN $\genpos, \genneg, \cutstar$
	\end{algorithmic}
\end{algorithm}

\StochasticPropagation\ is a zero-error randomized algorithm: it adds sets to $\genpos$ or $\genneg$ only when previous evidence guarantees that is the correct choice. 
The only effect of the randomness is how many sets actually are determined to be placed in $\genpos$ or $\genneg$.

What is particularly important is that the propagation in \StochasticPropagation\ is stronger than \PropagateNegative, but it is still slower.
For the best performance, we need a balance between the branching procedure and the propagation procedure. 
This balance is found by adjusting the number of random samples between successful updates in \StochasticPropagation, as well as the total time to spend in that algorithm.
Further, we can disable the call to \StochasticPropagation\ until a certain number of $k$-sets have been added to $\branchpos$ or $\branchneg$, thereby strongly constraining the linear program and leading to a more effective random sampling process.

Solving the case $k = 7$ and $n = 22$ required more than 100 days of computation time for the deterministic branch-and-cut method, but required only a few hours using the randomized algorithm.

\subsection{Results Using \StochasticPropagation}\label{sec:results}

Using \BranchAndCut\ with \StochasticPropagation, we computed $g(n,7)$ for $8\leq n \leq 29$ and verified that $f(k) = 3k+2$ for $k =  7$.

\begin{theorem}
	$f(7) = 23$.
\end{theorem}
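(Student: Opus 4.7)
The plan is to follow the same blueprint as the proof of Theorem~\ref{thm:branching}, now substituting \StochasticPropagation\ (Algorithm~\ref{alg:stochastic}) for \PropagateNegative\ inside \BranchAndCut. By Lemma~\ref{lma:remainders}, to conclude $f(7) = 23$ it suffices to establish two things: (i) $\hat g(n,7) > 0$ for some $n \leq 22$, giving the lower bound $f(7) \geq 23$, and (ii) $\hat g(n,7) = 0$ for $7$ consecutive values $n \in \{23, 24, \dots, 29\}$, giving the upper bound $f(7) \leq 23$. The $7$ consecutive values cover each residue class modulo $k=7$, so Lemma~\ref{lma:remainders} extends the conclusion to all $n \geq 23$.

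For (i), I would exhibit the vector $\vx$ of the form $a^b\ (-b)^{n-b}$ with $a = 3$ and $b = n-3$: this has sum $0$ and $s_7(\vx) = \binom{n-3}{7}$, since a $7$-sum is nonnegative iff it avoids all three copies of $-(n-3)$. For $n = 22$ this gives $\binom{19}{7} = 50\,388 < 54\,264 = \binom{21}{6}$, establishing $\hat g(22,7) > 0$. More generally, following the Theorem~\ref{thm:branching} proof, I would loop over all splits $a + b = n$ for each $n \leq 3k+1 = 22$, let $t(n)$ be the minimum value of $s_7(\vx)$ over these candidate extremal vectors, and invoke \BranchAndCut$(n,7,t(n),\varnothing,\varnothing,\binom{[n]}{7})$ to certify that no $\vx \in F_n$ achieves $s_7(\vx) < t(n)$; this simultaneously computes $g(n,7)$ for each $n$ in that range.

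For (ii), I would invoke \BranchAndCut$(n,7,\binom{n-1}{6},\varnothing,\varnothing,\binom{[n]}{7})$ for each $n \in \{23,24,\dots,29\}$ and verify that every call returns $\Null$, meaning no $(n,7,\binom{n-1}{6})$-bad vector exists. Running the computations in increasing $n$ is essential so that by the time we call \PropagateNegative\ (invoked inside \StochasticPropagation) at parameter $n$, the value $g(n-7,7)$ required by Lemma~\ref{lma:baranyaitrick} has already been computed in a previous stage.

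The main obstacle is computational: the deterministic \BranchAndCut\ alone was reported to be intractable near $n = f(k)$, with the case $(k,n) = (7,22)$ requiring over $100$ days of CPU time by the deterministic algorithm. The key substantive step is therefore to verify that \StochasticPropagation, being a zero-error randomized routine (its only effect is to add sets to $\genpos$ or $\genneg$ that Observations~\ref{lma:propagatepositive} and \ref{lma:propagatenegativeLP} guarantee are forced), reduces the running time enough to complete each of the cases $n \in \{22, 23, \dots, 29\}$. Because \StochasticPropagation\ only ever adds forced sets, the correctness of \BranchAndCut\ is unaffected, and a successful termination returning $\Null$ constitutes a rigorous certificate that the corresponding linear programs have no $(n,7,t)$-bad solution. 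Aggregating these certificates across the required range of $n$ and combining with the extremal vector at $n = 22$ yields $f(7) = 23$.
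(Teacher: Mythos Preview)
Your proposal is correct and follows essentially the same approach as the paper: the paper's proof is a one-sentence reference back to the Theorem~\ref{thm:branching} outline with \StochasticPropagation\ substituted for \PropagateNegative, and you have accurately reconstructed that outline, including the use of Lemma~\ref{lma:remainders}, the extremal vectors $a^b\,(-b)^a$ for the lower bound, the increasing-$n$ execution order so that $g(n-7,7)$ is available to \PropagateNegative, and the verification of $\hat g(n,7)=0$ for $n\in\{23,\dots,29\}$.
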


The computation for this theorem is the same as in Thoerem~\ref{thm:branching}, except we replace \PropagateNegative\ with \StochasticPropagation.

%
%
%

\subsection{Implementation}

The \BranchAndCut\ algorithm was implemented in the \textsl{MMSConjecture} project within the \textsl{SearchLib} collection\footnote{\textsl{SearchLib} is available at \url{http://www.math.illinois.edu/~stolee/SearchLib/}.}.
The software is available for download, reuse, and modification under the GNU Public License 3.0.

For $k\in \{4,5\}$, we were able to verify the statements using the exact arithmetic solver supplied with GLPK~\cite{GLPK}, but for $k \geq 6$ this method was too slow and instead was verified using the floating-point linear programming software CPLEX~\cite{CPLEX}.
The number of search nodes for each search, as well as the amount of computation time for each case are presented in Tables~\ref{tab:bandctiming34}, \ref{tab:bandctiming56}, and \ref{tab:k7}.
Empty cells refer to computations that were too long to complete (but only for cases where $k$ divides $n$) and cells containing ``---'' refer to experiments reporting less than 0.01 seconds of computation time.
In addition to the data presented here, all collected statistics and sharp examples are available online\footnote{All data is available at \url{http://www.math.illinois.edu/~stolee/data.htm}.}.

Execution times under one day were executed in a single process on a 2.3 GHz Intel Core i5 processor.
Longer execution times are from parallel execution on the Open Science Grid \cite{OpenScienceGrid} using the University of Nebraska Campus Grid \cite{WeitzelMS}.
The nodes available on the University of Nebraska Campus Grid consist of Xeon and Opteron processors with a range of speed between 2.0 and 2.8 GHz.

\section{Sharp Examples, Uniqueness, and the Strengthened Conjecture} 
\label{sec:strong}

In Tables \ref{tab:bandctiming34}, \ref{tab:bandctiming56}, and \ref{tab:k7}, the right-most column contains descriptions of the vectors $\vx$ with $s_k(\vx)$ of minimum value.
For $n \geq f(k)$, the vectors listed had $s_k(\vx)$ of minimum value while also having $\sigma_T(\vx) < 0$ for $T = \{1, n-k+2, \dots, n\}$.
Observe that for all $n < f(k)$, the extremal examples use only two numbers, and have the form $a^b\ (-b)^a$ where $a + b = n$.

Recall Conjecture~\ref{conj:ours}, where we claim $f(k)$ is exactly equal to $N_k$, where $N_k$ is the minimum integer such that $\binom{N_k-3}{k} \geq \binom{N_k-1}{k-1}$.
We make this conjecture for two reasons.
First, the example of $3^{n-3}\ (-(n-3))^3$ was known to have fewer than $\binom{n-1}{k-1}$ nonnegative $k$-sums when $3k < n < N_k$ by previous authors (for example, see~\cite{C} where these vectors were used to show a lower bound $f(k) \geq \frac{22k}{7}$), but no examples were previously discovered for $n \geq N_k$.
Second, our search for extremal vectors found no violation to this bound, but also showed that the extremal examples have the form $a^b\ (-b)^a$ when $n \leq N_k$ and $k$ does not divide $n$.
We therefore tested all vectors of the form $a^b\ (-b)^a$ for all $k \leq 250$.
For all numbers $n$ with $3k < n < N_k$ the example $3^{n-3}\ (-(n-3))^3$ was the best vector of this form, and for $N_k \leq n < 4k$ the best vector was $(n-1)^1\ (-1)^{n-1}$.

Figure~\ref{fig:plot} contains a plot of $N_k/k$ and the line $y = \lim_{k\to\infty}N_k/k$, to demonstrate the conjectured values of $f(k)/k$.

\begin{figure}[tp]
	\centering
	\includegraphics[height=2.5in]{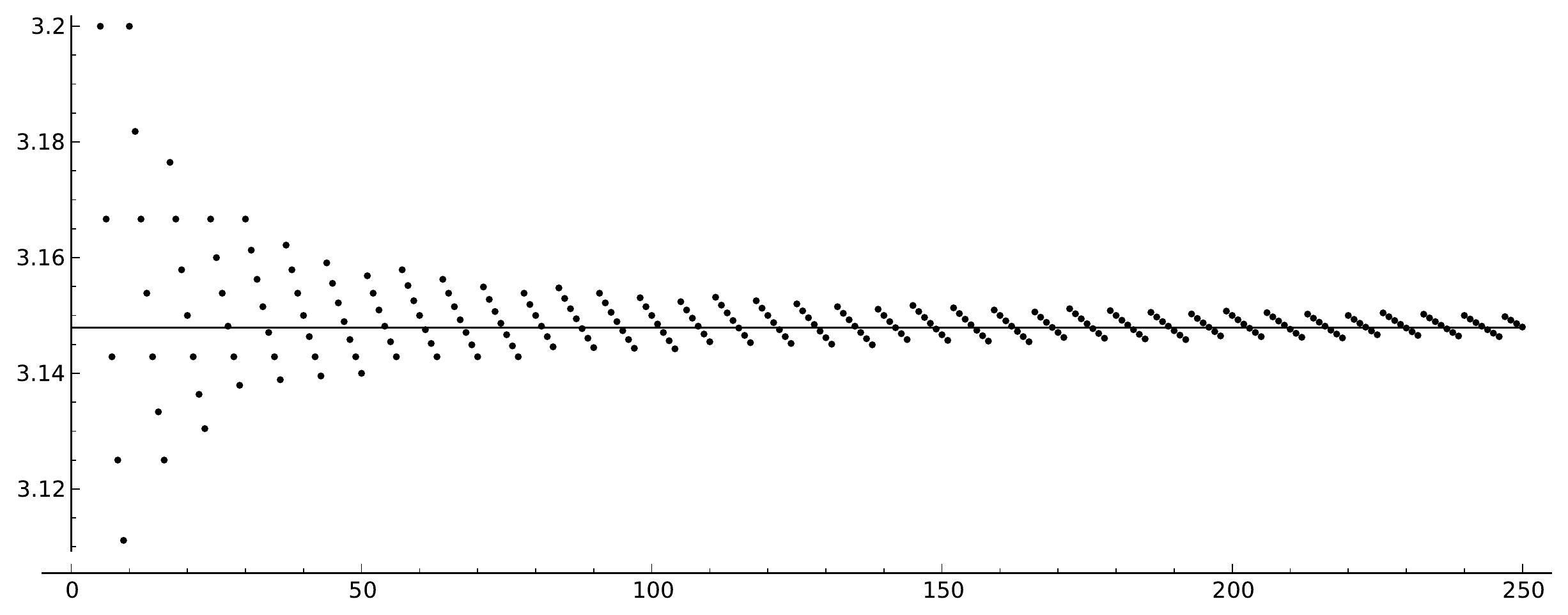}
	\caption{\label{fig:plot}Values of $N_k/k$ for $5 \leq k \leq 250$ and the line $y = \lim_{k\to\infty}  N_k/k$.}
\end{figure}

Another strengthening of the conjecture that follows from our method (for $k \leq \maxkvalue$) and that of Chowdhury (for $k = 3$) is that when $n \geq f(k)$, a vector $\vx \in F_n$ has $s_k(\vx) = \binom{n-1}{k-1}$ only if $x_1 + x_{n-k+2} + \cdots + x_n$ is nonnegative.
Essentially, any sharpness example contains the same set of nonnegative $k$-sums as the sharpness example $(n-1)^1\ (-1)^{n-1}$.

We test this example is essentially unique by searching for an vector $\vx \in F_n$ where $\sigma_T(\vx) < 0$ for $T = \{1,n-k+2,\dots,n\}$ and $s_k(\vx) \leq \binom{n-1}{k-1}$.
If no such vector is found, then the sharpness example is ``unique,'' since all vectors $\vx \in F_n$ with $s_k(\vx) \leq \binom{n-1}{k-1}$ have $\cutpos_\vx = \cL_k(T)$.
Define $g_s(n,k)$ to be the minimum $s_k(\vx)$ over all $\vx \in F_n$ where $\sigma_T(\vx) < 0$.
We can use our target value $t$ to find an $(n,k,t)$-bad vector $\vx \in F_n$ with $\sigma_T(\vx) < 0$.
While the bound $t \leq \binom{n-1}{k-1}$ in the hypothesis of Lemma~\ref{lma:baranyaitrick} does not hold, observe that since $\sigma_T(\vx) < 0$ the conclusion of the lemma does hold in this scenario.
Therefore, \PropagateNegative\ (Algorithm~\ref{alg:propagation}) remains correct when  verifying $g_s(n,k) \geq t$ using a call to \BranchAndCut$(n, k, t, \varnothing, \{ T\}, \nCk\setminus \cR_k(T))$.

Values of $g_s(n,k)$ were computed for $3 \leq k\leq 6$, $f(k)\leq n < f(k) + k$ and are given in Table~\ref{tab:strongvalues}.
The sharp examples for vectors $\vx\in F_n$ with $\sigma_T(\vx) < 0$ are given in the final columns of Tables~\ref{tab:bandctiming34} and \ref{tab:bandctiming56}.
Observe that the sharp examples make a ``phase transition'' at $n = 4k$, where the sharp examples have the form $a^b\ (-b)^a$ for all $n < 4k$, but then the examples with $n \geq 4k$ have at least three distinct values.
This may be hinting to a deeper truth concerning the originally conjectured bound of $f(k) \leq 4k$.

\subsection{Conclusions}

We developed two methods for verifying $g(n,k)\geq t$, which we used to prove our strengthening of the Manickam-Mikl\'os-Singhi conjecture for all $k \leq 7$, extending the previously best result of $k \leq 3$~\cite{C}.
Our branch-and-cut method \BranchAndCut\ uses a branching procedure which we prove will, in finite time, verify $g(n,k)\geq t$ or find a vector $\vx \in F_n$ with $s_k(\vx) < t$.
Using the randomized propagation algorithm \StochasticPropagation, we computed $f(k)$ and the values of $g(n,k)$ for all $k \leq \maxkvalue$ and $k < n < f(k) + k$.
Our implementations were not successful in extending our results to larger values of $k$ due to a combination of large computation time and memory requirements.

\section*{Acknowledgments}

The authors thank Ameera N. Chowdhury for many interesting discussions on this problem.
The authors also thank Igor Pak for suggesting a computational approach which led to our branch-and-cut method.

\appendix
\clearpage
\small

\section{Computation Data}

\def\tablefontsize{\footnotesize}

\begin{table}[H]
	\centering
	\footnotesize
	\begin{tabular}[h]{c|r|r|r|r|r@{.}l|r@{.}l|c}
		 $k$ & \multicolumn{1}{c|}{$n$} &\multicolumn{1}{c|}{$g(n,k)$} & \multicolumn{1}{c|}{$\hat g(n,k)$} &  \multicolumn{1}{c|}{Nodes} & \multicolumn{2}{c|}{GLPK}& \multicolumn{2}{c|}{CPLEX} & Strong Example\\
		\hline\hline
	 3 &       4 &        1 &     2   &       2 & \multicolumn{2}{c|}{---} &  \multicolumn{2}{c|}{---} & 
	 	$1^3\ (-3)^1$ \\\hline
	 3 &       5 &        3 &      3  &       2 & \multicolumn{2}{c|}{---} &  \multicolumn{2}{c|}{---} & 
	 	$3^2\ (-2)^3$ \\\hline
	 3 &       6 &       10 &      0 &       8 & \multicolumn{2}{c|}{---} &  \multicolumn{2}{c|}{---} & 
	 
	 \\\hline
	 3 &       7 &      10 &        5 &       2 & \multicolumn{2}{c|}{---} &  \multicolumn{2}{c|}{---} &
	 $2^5\ (-5)^2$
	  \\\hline
	 3 &       8 &      16 &       5 &       2 &  \multicolumn{2}{c|}{---} &  \multicolumn{2}{c|}{---} &
	 $5^3\ (-3)^5$
	 \\\hline
	 3 &       9 &      28 &       0 &       2 & \multicolumn{2}{c|}{---} &  \multicolumn{2}{c|}{---} &
	 \\\hline
	 3 &      10 &      35 &       1 &       4 &  \multicolumn{2}{c|}{---} &  \multicolumn{2}{c|}{---} &
	 $3^7\ (-7)^3$
	 \\\hline
	 3 &      11 &      45 &       0 &       6 &  \multicolumn{2}{c|}{---} &  \multicolumn{2}{c|}{---} &
	 $7^4\ (-4)^7$
	  \\\hline
	 3 &      12 &      55 &       0 &       2 &  \multicolumn{2}{c|}{---} &  \multicolumn{2}{c|}{---} &
	  $7^5\ (-5)^7$
	 \\\hline
	 3 &      13 &      66 &       0 &       2 &  \multicolumn{2}{c|}{---} &  \multicolumn{2}{c|}{---} &
	  $4^9\ (-9)^4$
	 \\\hline\hline

	 4 &       5 &       1 &       3 &       2 & \multicolumn{2}{c|}{---} &  \multicolumn{2}{c|}{---} & 
	 	$1^4\ (-4)^1$ \\\hline
	 4 &       6 &       5 &       5 &       2 & \multicolumn{2}{c|}{---} &  \multicolumn{2}{c|}{---} & 
	 	$1^5\ (- 5)^1$
	 \\\hline
	 4 &       7 &      10 &      10 &       2 & \multicolumn{2}{c|}{---} &  \multicolumn{2}{c|}{---} &
	 $5^2\ (-2)^5$
	  \\\hline
	 4 &       8 &      35 &       0 &     268 &                   0&47 s &                   0&31 s &
	 \\\hline
	 4 &       9 &      35 &      21 &       4 & \multicolumn{2}{c|}{---} &  \multicolumn{2}{c|}{---} &
	 $2^7\ (- 7)^2$
	 \\\hline
	 4 &      10 &      70 &      14 &      28 &                   0&11 s &                   0&03 s &
	 $2^8\ (- 8)^2$
	 \\\hline
	 4 &      11 &      92 &      28 &      10 &                   0&07 s &                   0&01 s &
	 $8^3\ (-3)^8$
	  \\\hline
	 4 &      12 &     165 &       0 &      50 &                   0&35 s &                   0&11 s &
	 \\\hline
	 4 &      13 &     210 &      10 &      52 &                   0&82 s &                   0&15 s &
	$3^{10}\ (-10)^3$
	 \\\hline
	 4 &      14 &     286 &       0 &      30 &                   0&65 s &                   0&10 s &
	  $10^4\ (-4)^{10}$
	 \\\hline
	 4 &      15 &     364 &       0 &      24 &                   0&51 s &                   0&10 s &
	 $11^4\ (-4)^{11}$
	 \\\hline
	 4 &      16 &     455 &       0 &       6 &                   0&15 s &                   0&04 s &
	  $35^1\ 3^{10}\ (-16)^5$
	 \\\hline
	 4 &      17 &     560 &       0 &       8 &                   0&31 s &                   0&07 s &
	 $38^1\ 4^{10}\ (-13)^6$
	 \\\hline\hline
	 5 &       6 &       1 &       4 &       2 & \multicolumn{2}{c|}{---} &  \multicolumn{2}{c|}{---} & $1^5\ (-5)^1$ \\\hline
	 5 &       7 &       6 &       9 &       2 & \multicolumn{2}{c|}{---} &  \multicolumn{2}{c|}{---} & $1^6\ (-6)^1$ \\\hline
	 5 &       8 &      16 &      19 &       4 & \multicolumn{2}{c|}{---} &  \multicolumn{2}{c|}{---} & $3^5\ (-5)^3$ \\\hline
	 5 &       9 &      35 &      35 &       4 &                   0&01 s &  \multicolumn{2}{c|}{---} & $7^2\ (-2)^7$ \\\hline
	 5 &      10 &     126 &       0 &         &  \multicolumn{2}{c|}{}   & \multicolumn{2}{c|}{}     &  \\\hline
	 5 &      11 &     126 &      84 &      10 &                   0&10 s &                   0&03 s  & $2^9\ (-9)^2$ \\\hline
	 5 &      12 &     246 &      84 &      92 &                   2&61 s &                   0&26 s  & $7^5\ (-5)^7$  \\\hline
	 5 &      13 &     405 &      90 &     234 &                  14&22 s &                   1&08 s  & $10^3\ (-3)^{10}$\\\hline
	 5 &      14 &     550 &     165 &      44 &                   3&37 s &                   0&44 s  & $11^3\ (-3)^{11}$\\\hline
	 5 &      15 &    1001 &       0 &     996 & \multicolumn{2}{c|}{}    &                  12&03 s  & \\\hline
	 5 &      16 &    1287 &      78 &     342 &                   2&43 m &                   7&78 s  & $3^{13}\ (-3)^{13}$ \\\hline
	 5 &      17 &    1820 &       0 &     702 &                   5&48 m &                  23&26 s  
	& $10^7\ (-7)^{10}$
	  \\\hline
	 5 &      18 &    2380 &       0 &     364 &                   4&46 m &                  20&77 s  
	 & $14^4\ (-4)^{14}$
	 \\\hline
	 5 &      19 &    3060 &       0 &     192 &                   3&64 m &                  16&54 s  
	& $15^4\ (-4)^{15}$
	 \\\hline
	 5 &      20 &    3876 &       0 &      64 &                   2&13 m &                   9&29 s  
	 & $79^1\ 19^1\ (-1)^{14}\ (-21)^4$
	 \\\hline
	 5 &      21 &    4845 &       0 &      64 &                   3&09 m &                  13&53 s  
	 & $67^1\ 4^{13}\ (-17)^{7}$
	 \\\hline
	 \hline
	\end{tabular}
	
	\caption{\label{tab:bandctiming34}Data for branch-and-cut method with GLPK and CPLEX, for $k \in \{3,4,5\}$.}
\end{table}

\begin{table}[htp]
	\centering
	\footnotesize
	\begin{tabular}[h]{c|r|r|r|r|r@{.}l|r@{.}l|r@{.}l|c}
		 $k$ & \multicolumn{1}{c|}{$n$} &\multicolumn{1}{c|}{$g(n,k)$} & \multicolumn{1}{c|}{$\hat g(n,k)$} &  \multicolumn{1}{c|}{Nodes} & \multicolumn{2}{c|}{GLPK}& \multicolumn{2}{c|}{CPLEX} & \multicolumn{2}{c|}{Stochastic} & Strong Example\\
	\hline
	 \hline
	 6 &       7 &       1 &       5 &       2 & \multicolumn{2}{c|}{---} &  \multicolumn{2}{c|}{---} & \multicolumn{2}{c|}{---} & $1^6\ (-6)^1$ \\\hline
	 6 &       8 &       7 &      14 &       2 & \multicolumn{2}{c|}{---} &  \multicolumn{2}{c|}{---} & \multicolumn{2}{c|}{---} & $1^7\ (-7)^1$ \\\hline
	 6 &       9 &      28 &      28 &       6 &                   0&01 s &  \multicolumn{2}{c|}{---} & 	 0&03 s & $1^8\ (-8)^1$ \\\hline
	 6 &      10 &      70 &      56 &      32 &                   0&20 s &                   0&05 s  &		0&25 s & $8^2\ (-2)^8$ \\\hline
	 6 &      11 &     126 &     126 &      10 &                   0&09 s &                   0&03 s  &		0&05 s & $9^2\ (-2)^9$\\\hline
	 6 &      12 &     462 &       0 &         &  \multicolumn{2}{c|}{}   & \multicolumn{2}{c|}{}     & \multicolumn{2}{c|}{   } \\\hline
	 6 &      13 &     462 &     330 &      24 &                  1&88 s &     0&16 s  & 0&21 s & $2^{11}\ (-11)^2$ \\\hline
	 6 &      14 &     924 &     363 &     294 &                   1&32 m &                   6&17 s  &    8&41 s & $2^{12}\ (-12)^2$ \\\hline
	 6 &      15 &    1705 &     297 &   12408 &                   2&94 h &                   6&47 m  &    4&76 m & $12^3\ (-3)^{12}$ \\\hline
	 6 &      16 &    2431 &     572 &    1296 &                  34&19 m &                   1&68 m  &   54&20 s & $13^3\ (-3)^{13}$\\\hline
	 6 &      17 &    3367 &    1001 &     266 &                  10&84 m &                  44&14 s  &   17&04 s & $14^3\ (-3)^{14}$\\\hline
	 6 &      18 &    6188 &       0 &  183960 & \multicolumn{2}{c|}{}    &                   9&66 h  & \multicolumn{2}{c|}{}\\\hline
	 6 &      19 &    8008 &     560 &    7262 & \multicolumn{2}{c|}{}    &                  47&46 m  & 13&74 m & $3^{16}\ (-16)^{3}$\\\hline
	 6 &      20 &   11628 &       0 &   27696 & \multicolumn{2}{c|}{}    &                   4&58 h  &  2&32 h
	& $3^{17}\ (-17)^{3}$ \\\hline
	 6 &      21 &   15504 &       0 &    8932 & \multicolumn{2}{c|}{}    &                   2&48 h  &  1&11 h
	& $17^4\ (-4)^{17}$ \\\hline
	 6 &      22 &   20349 &       0 &    4622 & \multicolumn{2}{c|}{}    &                   2&06 h  &  59&81 m
	& $18^4\ (-4)^{18}$  \\\hline
	 6 &      23 &   26334 &       0 &    2378 & \multicolumn{2}{c|}{}    &                   1&66 h  &  38&51 m
	& $19^4\ (-4)^{19}$ \\\hline
	 6 &      24 &   33649 &       0 &     764 & \multicolumn{2}{c|}{}    &                  49&35 m  & \multicolumn{2}{c|}{}
	 & $33^1\ 1^{16}\ (-7)^7$
	 \\\hline
	 6 &      25 &   42504 &       0 &     744 & \multicolumn{2}{c|}{}    &                   1&15 h  & 26&62 m 
	 & $104^1\ 4^{16}\ (-21)^{8}$ \\\hline
	\end{tabular}
		
	\caption{\label{tab:bandctiming56}Data for the branch-and-cut method using GLPK and CPLEX, for $k = 6$.}
\end{table}

\begin{table}[htp]
	\centering
	\footnotesize
	\begin{tabular}[h]{c|r|r|r|r|r@{.}l|r@{.}l|c}
		 $k$ & \multicolumn{1}{c|}{$n$} &\multicolumn{1}{c|}{$g(n,k)$} & \multicolumn{1}{c|}{$\hat g(n,k)$} &  \multicolumn{1}{c|}{Nodes} & \multicolumn{2}{c|}{Deterministic} & \multicolumn{2}{c|}{Stochastic} & Sharp Example \\
		\hline\hline
	 7 &       8 &       1 &        6 &       2  &  \multicolumn{2}{c|}{---}  &  \multicolumn{2}{c|}{---} & $1^7\ (-7)^1$ \\\hline
	 7 &       9 &       8 &       20 &       2  &  \multicolumn{2}{c|}{---}  &  \multicolumn{2}{c|}{---} & $1^8\ (-8)^1$ \\\hline
	 7 &      10 &      35 &       48 &       8  &  \multicolumn{2}{c|}{---}  &  \multicolumn{2}{c|}{---} & $7^3\ (-3)^7$ \\\hline
	 7 &      11 &      92 &      118 &      12  &  0&02 s   &  0&02 s & $3^8\ (-8)^3$ \\\hline
	 7 &      12 &     246 &      216 &     100  &  0&39 s   &  0&92 s & $5^7\ (-7)^5$ \\\hline
	 7 &      13 &     462 &      462 &          &  0&36 s   &  0&13 s & $11^2\ (-2)^{11}$ \\\hline
	 7 &      14 &    1716 &        0 &      26  & \multicolumn{2}{c|}{}   &  \multicolumn{2}{c|}{}&  \\\hline
	 7 &      15 &    1716 &    1287 &      58  &\ 6&02 s  &  9&59 s
	 	& $2^{13}\ (-13)^2$ \\\hline
	 7 &      16 &     3432 &    1573 &     1562  & 4&97 m   &  15&40 m
	 	& $2^{14}\ (-14)^2$ \\\hline
	 7 &      17 &     6116 &    1892 &    26852 &  2&61 h   &  1&07 h
	 	& $12^{5}\ (-5)^{12}$ \\\hline
	 7 &      18 &    10296 &   2080   & 450772 &  3&44 d       &  1&02 h
	 	& $5^{13}\ (-13)^{5}$ \\\hline
	 7 &      19 &    14924 &   3640  &   28778  &  1&25 d   &  1&15 h
	 	& $16^3\ (-3)^{16}$ \\\hline
	 7 &      20 &    20944 &   6188  &    3615 &  8&51 h   &  28&80 m
	 	& $17^3\ (-3)^{17}$ \\\hline
	 7 &      21 &   38760 &       0 &          &  \multicolumn{2}{c|}{}     &  \multicolumn{2}{c|}{}  
	 	&  \\\hline
	 7 &      22 &   50388 &  3876  &    795236 & \ 160&57 d     &  3&08 h
	 	& $3^{19}\ (-19)^{3}$ \\\hline
	 7 &      23 &  74613 &  0  &     13013$^*$     &  \multicolumn{2}{c|}{}    &   8&09 d$^*$
		 	&      \\\hline
	 7 &      24 & 100947  &  0  &    7870$^*$      &  \multicolumn{2}{c|}{}    &  4&61 d$^*$
		 	&      \\\hline
	 7 &      25 & 134596  &  0  &    6531$^*$   &  \multicolumn{2}{c|}{}    &  3&85 d$^*$
		 	&      \\\hline
	 7 &      26 & 177100  &  0  &    12718$^*$      &  \multicolumn{2}{c|}{}    &  9&25 d$^*$
		 	&      \\\hline
	 7 &      27 & 230230  &  0  &    30807$^*$      &  \multicolumn{2}{c|}{}    &  19&18 d$^*$
		 	&      \\\hline
	 7 &      28 & 296010  &  0  &    5564$^*$ &  \multicolumn{2}{c|}{}    &  2&51 d$^*$
		 	&      \\\hline
	 7 &      29 & 376740  &  0  &    6002$^*$      &  \multicolumn{2}{c|}{}    &  3&38 d$^*$   
		 	&      \\\hline
\end{tabular}
\vspace{1em}

\footnotesize
$^*$ These node counts and CPU times are averages of at least three runs using \StochasticPropagation.

\caption{\label{tab:k7}Completed computations for $k = 7$ using CPLEX.}
\end{table}

\clearpage

\begin{table}[htp]
	\centering
	\footnotesize
	\begin{tabular}[h]{r||c|c|c||c|c|c|c||c|c|c|c|c}
		$k$ & 3 & 3 & 3 & 4 & 4 & 4& 4 & 5 & 5& 5& 5& 5 \\
		\hline
		$n$ & 11 & 12 & 13 & 14 & 15 & 16 & 17 & 17 & 18 & 19 & 20 & 21  \\
		\hline
		$g(n,k)$ & 45 & 55 & 66 & 286 & 364 & 455 & 560 & 1820 & 2380 & 3060 & 3876 & 4845 \\
		\hline
		$g_s(n,k)$ & 46 & 80 & 84 & 311 & 375 & 455 & 750 & 1946 & 2562 & 3165 & 4876 & 6097
	\end{tabular}
	
	\vspace{0.5em}
	\begin{tabular}[h]{r||c|c|c|c|c|c}
		$k$ & 6 & 6 & 6 & 6 & 6 & 6\\
		\hline
		$n$ & 20 & 21 & 22 & 23 & 24 & 25 \\
		\hline
		$g(n,k)$ & 11628 & 15504 & 20349 & 26334 & 33649 & 42054 \\
		\hline
		$g_s(n,k)$ & 12376 & 17136 &  21777 &  27303 & 39836 &   50456
		\\
		\hline
		 Time & 5.71 d & 15.91 d & 2.26 d & 19.70 h & 67.60 d &  30.00 d  
	\end{tabular}
	\caption{\label{tab:strongvalues}Comparisons of $g(n,k)$ and $g_s(n,k)$ when $f(k) \leq n < f(k) + k$.}
\end{table}

\section{A Computer-Generated Proof that $f(3) \leq 11$}\label{apx:k3}

The following proofs were created by executing \BranchAndCut\ with propagation step \PropagatePositive\  (Algorithm~\ref{alg:propagatepos}) and writing down the $k$-sums which are determined to be strictly negative or nonnegative.

\begin{theorem}
	$g(3,11) = \binom{10}{2} = 45$.
\end{theorem}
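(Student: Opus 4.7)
The plan is to establish the two matching bounds separately. The upper bound $g(11,3) \leq 45$ is immediate from the sharp example $\vx = 10^1\ (-1)^{10}\in F_{11}$: a $3$-sum is nonnegative exactly when it contains the first coordinate, so $s_3(\vx) = \binom{10}{2} = 45$. All the work is in the lower bound $g(11,3) \geq 45$, which asserts that no $(11,3,45)$-bad vector exists in $F_{11}$.

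For the lower bound, I would invoke the \BranchAndCut\ procedure with propagation step \PropagatePositive\ on input $(n,k,t) = (11,3,45)$ and $\genpos = \genneg = \varnothing$, $\cutstar = \binom{[11]}{3}$. The approach is to certify that this single call terminates with $|\cL_3(\genpos)| \geq 45$ and no branching needed, so that Observation~\ref{obs:propagate} immediately gives $s_3(\vx) \geq 45$ for every feasible $\vx$. The initial call to \PropagateNegative\ uses Observation~\ref{obs:forcenegative} (for $3$-sets $S$ with $L_3(S) \geq 45$) together with Lemma~\ref{lma:baranyaitrick} applied with the previously established value $g(8,3) = 16$ (for $3$-sets $S$ with $1 \in S$ and $L_3(S) + 16 \geq 45$) to seed $\genneg$ with a substantial $\succeq$-maximal antichain of forced-negative $3$-sets.

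The outer loop of \PropagatePositive\ then iterates: for each remaining $S \in \cutstar$ taken in reverse lex order, it tests whether $P(11,3,\genpos,\genneg \cup \{S\})$ is infeasible, and in that case places $S$ into $\genpos$ via Observation~\ref{lma:propagatepositive} before re-invoking \PropagateNegative. Each LP infeasibility is witnessed by an explicit nonnegative dual combination of the constraints $\sum_{i=1}^{11} x_i \geq 0$, the monotonicity constraints $x_i - x_{i+1} \geq 0$, the nonnegativity constraints from $\genpos$, and the new cut $\sum_{i \in S} x_i \leq -1$; producing these witnesses is what the solver does internally, and they are easy to record alongside each decision.

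The main task, and the only real obstacle, is bookkeeping: one must display the final partition $\cL_3(\genpos) \cup \cR_3(\genneg) = \binom{[11]}{3}$ produced by the algorithm and verify by direct counting (using the formula for $L_3$ from the proposition of Chowdhury) that $|\cL_3(\genpos)| \geq 45$. Since for $k = 3$ the procedure is reported to terminate in at most three iterations of the outer \texttt{while} loop without ever invoking the branching step, the full certificate consists of a short list of $3$-sets added to $\genpos$ and $\genneg$ at each pass, which can be checked by hand against Lemmas~\ref{lma:baranyaitrick} and \ref{lma:propagatenegative} and Observation~\ref{lma:propagatepositive}. Combined with the sharp example above, this yields $g(11,3) = 45$.
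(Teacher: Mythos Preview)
Your proposal is correct and follows essentially the same approach as the paper: the paper's proof is precisely the certificate you describe, listing nine $\succeq$-maximal $3$-sets forced negative by \PropagateNegative\ (using Observation~\ref{obs:forcenegative} and Lemma~\ref{lma:baranyaitrick} with $g(8,3)=16$) and then three $3$-sets forced nonnegative by \PropagatePositive\ via LP infeasibility, after which $|\cL_3(\genpos)| = 56 \geq 45$ already holds, with no branching and no further iterations needed. One small clarification: termination occurs as soon as $|\cL_3(\genpos)| \geq 45$, so a full partition $\cL_3(\genpos)\cup\cR_3(\genneg)=\binom{[11]}{3}$ is never reached or needed.
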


\begin{proof}
The following sums generated by \PropagateNegative\ (Algorithm~\ref{alg:propagation})  must be strictly negative or we have at least 45 of nonnegative sets:
\begin{align*}
x_{ 1} +  & x_{ 6} + x_{11}	&	x_{ 1} +  & x_{ 8} + x_{10}	&	x_{ 2} +  & x_{ 5} + x_{11}\\
	x_{ 2} +  & x_{ 6} + x_{10}	&	x_{ 2} +  & x_{ 7} + x_{ 9}	&	x_{ 3} +  & x_{ 4} + x_{11}\\
	x_{ 3} +  & x_{ 5} + x_{ 9}	&	x_{ 3} +  & x_{ 7} + x_{ 8}	&	x_{ 4} +  & x_{ 6} + x_{ 8}
	\end{align*}
The following sums generated by \PropagatePositive\ (Algorithm~\ref{alg:propagatepos}) must be nonnegative or else the associated linear program (Definition~\ref{def:lp}) becomes infeasible:
\begin{align*}
	x_{ 4} + &x_{ 6} + x_{ 7}	&	x_{ 4} + &x_{ 5} + x_{ 8}	&	x_{ 3} + &x_{ 4} + x_{10}
\end{align*}
These positive sets now force at least 56 nonnegative $3$-sums, and our target was $45$ nonnegative $3$-sums.
\end{proof}

%
%

\begin{theorem}
	$g(3,13) = \binom{12}{2} = 66$.
\end{theorem}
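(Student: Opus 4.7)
The plan is to mirror the computer-generated proof for $n = 11$ presented in Appendix~\ref{apx:k3}. We run \BranchAndCut$(13, 3, 66, \varnothing, \varnothing, \binom{[13]}{3})$ with propagation step \PropagatePositive\ (Algorithm~\ref{alg:propagatepos}) and record the $3$-sums forced to be strictly negative and those forced to be nonnegative.

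First I would invoke \PropagateNegative\ (Algorithm~\ref{alg:propagation}). Using $g(10,3) = 35$ from Table~\ref{tab:bandctiming34}, Lemma~\ref{lma:baranyaitrick} forces $\sigma_S(\vx) < 0$ for every $3$-set $S$ with $1 \in S$ and $L_3(S) + 35 \geq 66$; more sets are forced by Lemma~\ref{lma:propagatenegative} once $\branchpos$ starts to grow. I would list these sets explicitly (iterating in lex order) and update $\cutstar$ accordingly.

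Next I would enter the \PropagatePositive\ loop: for each surviving $S \in \cutstar$, taken in reverse lex order, I would test whether $P(13, 3, \genpos, \genneg \cup \{S\})$ is infeasible. Whenever infeasibility is detected, Observation~\ref{lma:propagatepositive} certifies that $S$ must be nonnegative; I add $S$ to $\genpos$ and immediately re-run \PropagateNegative\ to harvest any new forced-negative sets that become available as $|\cL_3(\genpos)|$ grows. I expect the loop to terminate in at most a few passes (as it did for $n = 11$), yielding a small explicit list of required-nonnegative $3$-sums.

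The proof will conclude by counting: once the computed $\genpos$ satisfies $|\cL_3(\genpos)| \geq 66$, Observation~\ref{obs:propagate} gives $s_3(\vx) \geq 66$ for every $\vx \in F_{13}$, while the sharpness example $(12)^1\,(-1)^{12}$ shows $g(13,3) \leq \binom{12}{2} = 66$, so $g(13,3) = 66$. The main obstacle is simply the bookkeeping: enumerating the particular $3$-sets that appear in $\genneg$ and $\genpos$ and verifying by hand (or with a short LP call) that the listed positive sets indeed make the corresponding linear programs infeasible. The logical structure, however, is identical to the $n = 11$ case and requires no new ideas beyond the propagation machinery already developed.
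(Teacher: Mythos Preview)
Your plan is essentially the paper's approach, but you have slightly over-anticipated the work required. For $n=13$ the paper's proof is shorter than for $n=11$: a single call to \PropagateNegative\ (using $g(10,3)=35$ in Lemma~\ref{lma:baranyaitrick} together with Observation~\ref{obs:forcenegative}) produces seven forced-negative $3$-sums, and the linear program $P(13,3,\varnothing,\genneg)$ is already infeasible at that point. Thus the algorithm terminates via Observation~\ref{obs:pruning} before any \PropagatePositive\ iteration occurs, and you never reach the counting termination $|\cL_3(\genpos)|\geq 66$ that you describe. Your outline is not wrong---running the procedure as you describe would discover this---but your write-up should reflect that the conclusion comes from infeasibility rather than from accumulating enough positive sets.
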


\begin{proof}
The following sums generated by  \PropagateNegative\ (Algorithm~\ref{alg:propagation}) must be strictly negative or we have at least 66 nonnegative sets:
\begin{align*}
x_{ 1} +  & x_{ 5} + x_{13}	&	x_{ 1} +  & x_{ 6} + x_{12}	&	x_{ 1} +  & x_{ 7} + x_{11}\\
	x_{ 3} +  & x_{ 5} + x_{12}	&	x_{ 3} +  & x_{ 6} + x_{10}	&	x_{ 4} +  & x_{ 5} + x_{11}\\
	x_{ 4} +  & x_{ 7} + x_{ 9}	&	
\end{align*}
	The associated linear program is infeasible.		
\end{proof}

\section{A Computer-Generated Proof that $f(4) \leq 14$}\label{apx:k4}

The following proofs were created by executing \BranchAndCut\ with propagation step \PropagatePositive\ (Algorithm~\ref{alg:propagatepos}) and writing down the $k$-sums which are determined to be strictly negative or nonnegative.

\begin{theorem}
	$g(4,14) = \binom{13}{3} = 286$.
\end{theorem}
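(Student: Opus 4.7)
The plan is to mirror the computer-generated certificates already produced for $f(3)\leq 11$ and $f(3)\leq 13$ in Appendix~\ref{apx:k3}: execute \BranchAndCut\ on input $(14,4,286,\varnothing,\varnothing,\binom{[14]}{4})$ with the propagation step replaced by \PropagatePositive\ (Algorithm~\ref{alg:propagatepos}), and transcribe the $4$-sets that are successively committed to $\genneg$ or $\genpos$. The target is $t=286$, so I also invoke the entry $g(10,4)=70$ from Table~\ref{tab:bandctiming34}, since \PropagateNegative\ calls on $g(n-k,k)=g(10,4)$ inside the criterion of Lemma~\ref{lma:baranyaitrick}.

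First I would run \PropagateNegative: iterate $S\in\binom{[14]}{4}$ in lexicographic order and place $S$ into $\genneg_0$ whenever either $L_4(S)\geq 286$ (Observation~\ref{obs:forcenegative}) or $1\in S$ and $L_4(S)+70\geq 286$ (Lemma~\ref{lma:baranyaitrick}), using the closed-form recursion for $L_4$ from Proposition~1. This produces an initial list of sums $\sigma_S(\vx)<0$ analogous to the first displays in the $k=3$ proofs.

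Next I would execute the outer loop of \PropagatePositive: in reverse lex order, for each still-undecided $4$-set $S$ solve the linear program $P(14,4,\genpos,\genneg\cup\{S\})$; if infeasible, commit $S$ to $\genpos$ by Observation~\ref{lma:propagatepositive}, re-invoke \PropagateNegative, and continue. The claim to be verified is that this deterministic loop terminates in finitely many passes without ever needing to branch, and that upon termination either $|\cL_4(\genpos)|\geq 286$ (so Observation~\ref{obs:propagate} certifies $s_4(\vx)\geq 286$ for every $\vx\in F_{14}$) or $P(14,4,\genpos,\genneg)$ is infeasible (so Observation~\ref{obs:pruning} rules out any remaining $(14,4,286)$-bad vector). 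Together with the sharpness example $(n-1)^1\,(-1)^{n-1}$, which attains $\binom{13}{3}=286$, this gives $g(14,4)=286$.

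The main obstacle will not be any mathematical subtlety but the sheer length of the certificate: $\binom{14}{4}=1001$, and a considerably larger catalog of forced-negative and forced-nonnegative $4$-sums than in the $k=3$ examples must be written down. To keep the appendix human-readable I would organize the list by the outer iteration of \PropagatePositive\ (as done in Appendix~\ref{apx:k3}), and whenever the terminating condition is LP infeasibility I would make the certificate self-contained by exhibiting a Farkas witness: nonnegative multipliers on the constraints $\sum_{i=1}^{14}x_i\geq 0$, $x_i-x_{i+1}\geq 0$, $\sum_{i\in S}x_i\geq 0$ for $S\in\genpos$, and $-\sum_{i\in T}x_i\geq 1$ for $T\in\genneg$ summing to $0\geq 1$. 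Producing that explicit Farkas combination is the only step beyond pure enumeration, and it can be read off from the dual solution returned by the LP solver at the moment infeasibility is first detected.
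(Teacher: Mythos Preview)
Your plan matches the paper's proof essentially exactly: the paper runs \PropagatePositive\ (with nested calls to \PropagateNegative) starting from the empty configuration, records three rounds of forced-negative sums and two rounds of forced-nonnegative sums (reaching $199$ and then $265$ guaranteed nonnegative $4$-sums), and terminates when the linear program $P(14,4,\genpos,\genneg)$ becomes infeasible---no branching is required, just as you anticipated. Your proposal to append an explicit Farkas certificate for the final infeasibility step goes slightly beyond what the paper provides (it simply asserts infeasibility), but this is a harmless strengthening rather than a divergence in approach.
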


\begin{proof}
The following sums generated by \PropagateNegative\ (Algorithm~\ref{alg:propagation})  must be strictly negative or we have at least 286 nonnegative sets:
\begin{align*}
	x_{ 1} + x_{ 7} +  & x_{13} + x_{14}	&	x_{ 1} + x_{ 8} +  & x_{12} + x_{14}	&	x_{ 1} + x_{ 9} +  & x_{11} + x_{14}\\
	x_{ 2} + x_{ 5} +  & x_{10} + x_{14}	&	x_{ 2} + x_{ 6} +  & x_{ 9} + x_{14}	&	x_{ 2} + x_{ 6} +  & x_{10} + x_{13}\\
	x_{ 2} + x_{ 8} +  & x_{ 9} + x_{13}	&	x_{ 3} + x_{ 4} +  & x_{11} + x_{14}	&	x_{ 3} + x_{ 5} +  & x_{ 9} + x_{14}\\
	x_{ 3} + x_{ 5} +  & x_{10} + x_{13}	&	x_{ 3} + x_{ 6} +  & x_{ 8} + x_{14}	&	x_{ 3} + x_{ 6} +  & x_{ 9} + x_{13}\\
	x_{ 3} + x_{ 6} +  & x_{10} + x_{12}	&	x_{ 3} + x_{ 7} +  & x_{ 8} + x_{13}	&	x_{ 3} + x_{ 7} +  & x_{ 9} + x_{12}\\
	x_{ 4} + x_{ 5} +  & x_{ 8} + x_{14}	&	x_{ 4} + x_{ 5} +  & x_{ 9} + x_{13}	&	x_{ 4} + x_{ 6} +  & x_{ 8} + x_{13}\\
	x_{ 4} + x_{ 6} +  & x_{ 9} + x_{12}	&	x_{ 4} + x_{ 8} +  & x_{10} + x_{11}	&	x_{ 5} + x_{ 7} +  & x_{ 8} + x_{12}\\
	x_{ 5} + x_{ 7} +  & x_{10} + x_{11}	&	x_{ 6} + x_{ 8} +  & x_{ 9} + x_{11}	&	
\end{align*}

The following sums generated by \PropagatePositive\ (Algorithm~\ref{alg:propagatepos}) must be nonnegative or else the associated linear program becomes infeasible:
\begin{align*}
	x_{ 3} + x_{ 4} + &x_{ 7} + x_{ 9}	&	x_{ 3} + x_{ 4} + &x_{ 6} + x_{10}	&	x_{ 2} + x_{ 4} + &x_{ 8} + x_{10}\\
	x_{ 1} + x_{ 7} + &x_{ 8} + x_{ 9}	&	x_{ 1} + x_{ 6} + &x_{ 8} + x_{11}	&	x_{ 1} + x_{ 5} + &x_{ 9} + x_{11}\\
	x_{ 1} + x_{ 5} + &x_{ 7} + x_{12}	&	x_{ 1} + x_{ 4} + &x_{10} + x_{11}	&	x_{ 1} + x_{ 4} + &x_{ 9} + x_{12}\\
	x_{ 1} + x_{ 4} + &x_{ 8} + x_{13}	&	x_{ 1} + x_{ 4} + &x_{ 5} + x_{14}	&	x_{ 1} + x_{ 2} + &x_{10} + x_{13}\\
	x_{ 1} + x_{ 2} + &x_{ 8} + x_{14}	&	
\end{align*}
These positive sets now force at least 199 nonnegative $4$-sums.

The following sums generated by \PropagateNegative\ (Algorithm~\ref{alg:propagation}) must be strictly negative or we have  at least 286 of nonnegative sets:
\begin{align*}
	x_{ 1} + x_{ 7} +  & x_{12} + x_{14}	&	x_{ 1} + x_{ 8} +  & x_{11} + x_{14}	&	x_{ 2} + x_{ 4} +  & x_{11} + x_{14}\\
	x_{ 2} + x_{ 5} +  & x_{ 9} + x_{14}	&	x_{ 2} + x_{ 5} +  & x_{11} + x_{13}	&	x_{ 2} + x_{ 6} +  & x_{ 8} + x_{14}\\
	x_{ 2} + x_{ 6} +  & x_{ 9} + x_{13}	&	x_{ 2} + x_{ 7} +  & x_{10} + x_{12}	&	x_{ 3} + x_{ 4} +  & x_{10} + x_{14}\\
	x_{ 3} + x_{ 4} +  & x_{12} + x_{13}	&	x_{ 3} + x_{ 5} +  & x_{ 8} + x_{14}	&	x_{ 3} + x_{ 5} +  & x_{ 9} + x_{13}\\
	x_{ 3} + x_{ 5} +  & x_{10} + x_{12}	&	x_{ 3} + x_{ 6} +  & x_{ 7} + x_{14}	&	x_{ 3} + x_{ 6} +  & x_{ 8} + x_{13}\\
	x_{ 3} + x_{ 6} +  & x_{ 9} + x_{12}	&	x_{ 3} + x_{ 7} +  & x_{ 8} + x_{12}	&	x_{ 3} + x_{ 7} +  & x_{10} + x_{11}\\
	x_{ 3} + x_{ 8} +  & x_{ 9} + x_{11}	&	x_{ 4} + x_{ 5} +  & x_{ 7} + x_{14}	&	x_{ 4} + x_{ 5} +  & x_{ 8} + x_{13}\\
	x_{ 4} + x_{ 5} +  & x_{ 9} + x_{12}	&	x_{ 4} + x_{ 6} +  & x_{ 7} + x_{13}	&	x_{ 4} + x_{ 6} +  & x_{ 8} + x_{12}\\
	x_{ 4} + x_{ 6} +  & x_{ 9} + x_{11}	&	x_{ 5} + x_{ 7} +  & x_{ 8} + x_{11}	&	
\end{align*}

The following sums generated by \PropagatePositive\ (Algorithm~\ref{alg:propagatepos}) must be nonnegative or else the associated linear program becomes infeasible:
\begin{align*}
	x_{ 3} + x_{ 5} + &x_{ 6} + x_{ 9}	&	x_{ 3} + x_{ 4} + &x_{ 8} + x_{10}	&	x_{ 2} + x_{ 6} + &x_{ 7} + x_{10}\\
	x_{ 2} + x_{ 5} + &x_{ 8} + x_{10}	&	x_{ 2} + x_{ 4} + &x_{ 9} + x_{10}	&	x_{ 1} + x_{ 8} + &x_{ 9} + x_{10}\\
	x_{ 1} + x_{ 7} + &x_{10} + x_{11}	&	x_{ 1} + x_{ 7} + &x_{ 8} + x_{13}	&	x_{ 1} + x_{ 6} + &x_{ 9} + x_{12}\\
	x_{ 1} + x_{ 5} + &x_{10} + x_{13}	&	x_{ 1} + x_{ 5} + &x_{ 8} + x_{14}	&	x_{ 1} + x_{ 4} + &x_{10} + x_{14}
\end{align*}
These positive sets now force at least 265 nonnegative $4$-sums.

The following sums generated by \PropagateNegative\ (Algorithm~\ref{alg:propagation}) must be strictly negative or we have  at least 286 nonnegative sets:
\begin{align*}
	x_{ 1} + x_{ 5} +  & x_{12} + x_{14}	&	x_{ 1} + x_{ 6} +  & x_{11} + x_{14}	&	x_{ 1} + x_{ 7} +  & x_{12} + x_{13}\\
	x_{ 1} + x_{ 8} +  & x_{10} + x_{14}	&	x_{ 1} + x_{ 8} +  & x_{11} + x_{13}	&	x_{ 2} + x_{ 3} +  & x_{ 9} + x_{14}\\
	x_{ 2} + x_{ 3} +  & x_{10} + x_{13}	&	x_{ 2} + x_{ 4} +  & x_{ 7} + x_{13}	&	x_{ 2} + x_{ 4} +  & x_{ 9} + x_{12}\\
	x_{ 2} + x_{ 5} +  & x_{ 6} + x_{14}	&	x_{ 2} + x_{ 5} +  & x_{ 8} + x_{12}	&	x_{ 2} + x_{ 6} +  & x_{ 9} + x_{11}\\
	x_{ 3} + x_{ 4} +  & x_{ 6} + x_{13}	&	x_{ 3} + x_{ 4} +  & x_{ 8} + x_{12}	&	x_{ 3} + x_{ 5} +  & x_{ 7} + x_{12}\\
	x_{ 3} + x_{ 5} +  & x_{ 8} + x_{11}	&	x_{ 3} + x_{ 6} +  & x_{ 7} + x_{11}	&	x_{ 3} + x_{ 7} +  & x_{ 9} + x_{10}\\
	x_{ 4} + x_{ 5} +  & x_{ 6} + x_{12}	&	x_{ 4} + x_{ 5} +  & x_{ 7} + x_{11}	&	x_{ 4} + x_{ 6} +  & x_{ 8} + x_{10}\\
	x_{ 5} + x_{ 7} +  & x_{ 8} + x_{ 9}	&	
\end{align*}

	The associated linear program is infeasible.		
\end{proof}

\begin{theorem}
	$g(4, 15) = \binom{14}{3} = 364$.
\end{theorem}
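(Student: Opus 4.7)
The plan is to replicate the computer-generated proof style used for Theorem $g(4,14) = 286$: execute \BranchAndCut\ with the propagation step replaced by \PropagatePositive\ (Algorithm~\ref{alg:propagatepos}) starting from empty branch sets and target $t = \binom{14}{3} = 364$, and transcribe the successive batches of forced negative and positive $4$-sums produced by alternating calls to \PropagateNegative\ and \PropagatePositive\ until the linear program $P(15, 4, \genpos, \genneg)$ becomes infeasible (or until $|\cL_4(\genpos)| \geq 364$).

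First I would establish sharpness: the vector $14^1\ (-1)^{14}$ lies in $F_{15}$ and its nonnegative $4$-sums are exactly those containing the first coordinate, giving $\binom{14}{3} = 364$ such sums, so $g(15,4) \leq 364$. For the lower bound, initialize $\branchpos = \branchneg = \varnothing$ and $\cutstar = \binom{[15]}{4}$. The first \PropagateNegative\ sweep identifies all $S \in \binom{[15]}{4}$ with $L_4(S) \geq 364$, together with those $S$ containing $1$ such that $L_4(S) + g(11,4) \geq 364$, where we may use $g(11,4) = 92$ from Table~\ref{tab:bandctiming34} via Lemma~\ref{lma:baranyaitrick}. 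Each such $S$ is forced to satisfy $\sigma_S(\vx) < 0$ for every $(15,4,364)$-bad vector $\vx \in F_{15}$. Then \PropagatePositive\ iterates over the remaining sets of $\cutstar$ in reverse lex order, testing the LP $P(15,4,\genpos, \genneg \cup \{S\})$ for infeasibility; any such $S$ is added to $\genpos$ by Observation~\ref{lma:propagatepositive}, after which \PropagateNegative\ is called again to extract further forced negative sets. We alternate these calls, listing the newly forced sets in each phase, until the LP is declared infeasible by Observation~\ref{obs:pruning}.

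Correctness of the resulting proof log follows entirely from the machinery already in place: Lemma~\ref{lma:baranyaitrick} and Observation~\ref{obs:forcenegative} justify the initial negative deductions, Observations~\ref{lma:propagatepositive} and~\ref{lma:propagatenegativeLP} justify the LP-based deductions of \PropagatePositive, and Observation~\ref{obs:pruning} gives the contradiction at termination. No new lemma is needed and no branching is required, since the text notes that for $k \in \{3,4\}$ the branch-free version of \PropagatePositive\ terminates in at most three iterations of its outer loop.

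The main obstacle is purely clerical rather than conceptual: $\binom{15}{4} = 1365$ is considerably larger than $\binom{14}{4} = 1001$, so the list of forced negative and positive $4$-sets produced in each phase will be longer than for the $n = 14$ case, and transcribing these sets without a bookkeeping error (and arranging them readably in a multi-column display analogous to the $n=14$ proof) is the bulk of the work. A secondary concern is that the implementation uses floating-point LP solves by CPLEX for larger instances, so to be conservative one would want to run this particular case through GLPK's exact arithmetic solver (feasible for $k = 4$ per the discussion after Theorem~\ref{thm:branching}) before committing the log to the appendix.
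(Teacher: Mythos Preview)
Your proposal is correct and follows precisely the paper's approach: the paper's proof for $g(4,15)=364$ is exactly such a transcript of alternating \PropagateNegative/\PropagatePositive\ phases (one initial negative batch, one positive batch forcing $267$ nonnegative $4$-sums, one further negative batch, then LP infeasibility), with no branching required. Your identification of $g(11,4)=92$ as the auxiliary value for Lemma~\ref{lma:baranyaitrick} and of the sharpness example $14^1\,(-1)^{14}$ are both on target; all that remains is the clerical transcription you describe.
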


\begin{proof}
The following sums generated by \PropagateNegative\ (Algorithm~\ref{alg:propagation})  must be strictly negative or we have at least 364 nonnegative sets:
\begin{align*}
	x_{ 1} + x_{ 8} +  & x_{14} + x_{15}	&	x_{ 1} + x_{ 9} +  & x_{13} + x_{15}	&	x_{ 1} + x_{11} +  & x_{12} + x_{15}\\
	x_{ 2} + x_{ 5} +  & x_{11} + x_{15}	&	x_{ 2} + x_{ 6} +  & x_{10} + x_{15}	&	x_{ 2} + x_{ 6} +  & x_{11} + x_{14}\\
	x_{ 2} + x_{ 7} +  & x_{ 9} + x_{15}	&	x_{ 2} + x_{ 7} +  & x_{10} + x_{14}	&	x_{ 2} + x_{ 8} +  & x_{12} + x_{13}\\
	x_{ 2} + x_{10} +  & x_{11} + x_{13}	&	x_{ 3} + x_{ 4} +  & x_{13} + x_{15}	&	x_{ 3} + x_{ 5} +  & x_{ 9} + x_{15}\\
	x_{ 3} + x_{ 5} +  & x_{10} + x_{14}	&	x_{ 3} + x_{ 6} +  & x_{ 8} + x_{15}	&	x_{ 3} + x_{ 6} +  & x_{ 9} + x_{14}\\
	x_{ 3} + x_{ 6} +  & x_{10} + x_{13}	&	x_{ 3} + x_{ 8} +  & x_{ 9} + x_{13}	&	x_{ 3} + x_{ 9} +  & x_{11} + x_{12}\\
	x_{ 4} + x_{ 5} +  & x_{12} + x_{13}	&	x_{ 4} + x_{ 6} +  & x_{ 9} + x_{13}	&	x_{ 4} + x_{ 7} +  & x_{ 8} + x_{14}\\
	x_{ 4} + x_{ 7} +  & x_{10} + x_{12}	&	x_{ 5} + x_{ 6} +  & x_{ 8} + x_{14}	&	x_{ 5} + x_{ 8} +  & x_{ 9} + x_{12}\end{align*}

The following sums generated by \PropagatePositive\ (Algorithm~\ref{alg:propagatepos}) must be nonnegative or else the associated linear program becomes infeasible:
\begin{align*}
	x_{ 3} + x_{ 4} + &x_{ 7} + x_{10}	&	x_{ 3} + x_{ 4} + &x_{ 6} + x_{11}	&	x_{ 2} + x_{ 4} + &x_{ 7} + x_{11}\\
	x_{ 1} + x_{ 7} + &x_{ 9} + x_{12}	&	x_{ 1} + x_{ 6} + &x_{ 8} + x_{13}	&	x_{ 1} + x_{ 5} + &x_{11} + x_{12}\\
	x_{ 1} + x_{ 5} + &x_{ 8} + x_{14}	&	x_{ 1} + x_{ 4} + &x_{11} + x_{13}	&	x_{ 1} + x_{ 4} + &x_{ 7} + x_{15}\\
	x_{ 1} + x_{ 3} + &x_{ 8} + x_{15}	&	x_{ 1} + x_{ 2} + &x_{11} + x_{15}	&	
\end{align*}
These positive sets now force at least 267 nonnegative $4$-sums.

The following sums generated by \PropagateNegative\ (Algorithm~\ref{alg:propagation}) must be strictly negative or we have 364 nonnegative sets:
\begin{align*}
	x_{ 1} + x_{ 3} +  & x_{13} + x_{15}	&	x_{ 1} + x_{ 4} +  & x_{ 9} + x_{15}	&	x_{ 1} + x_{ 4} +  & x_{11} + x_{14}\\
	x_{ 1} + x_{ 5} +  & x_{ 8} + x_{15}	&	x_{ 1} + x_{ 5} +  & x_{ 9} + x_{14}	&	x_{ 1} + x_{ 5} +  & x_{10} + x_{13}\\
	x_{ 1} + x_{ 6} +  & x_{ 8} + x_{14}	&	x_{ 1} + x_{ 6} +  & x_{ 9} + x_{13}	&	x_{ 1} + x_{ 7} +  & x_{11} + x_{12}\\
	x_{ 1} + x_{ 8} +  & x_{10} + x_{12}	&	x_{ 2} + x_{ 3} +  & x_{ 9} + x_{15}	&	x_{ 2} + x_{ 3} +  & x_{10} + x_{14}\\
	x_{ 2} + x_{ 4} +  & x_{ 8} + x_{14}	&	x_{ 2} + x_{ 4} +  & x_{ 9} + x_{13}	&	x_{ 2} + x_{ 5} +  & x_{ 7} + x_{14}\\
	x_{ 2} + x_{ 5} +  & x_{ 8} + x_{13}	&	x_{ 2} + x_{ 5} +  & x_{ 9} + x_{12}	&	x_{ 2} + x_{ 6} +  & x_{ 8} + x_{12}\\
	x_{ 2} + x_{ 6} +  & x_{10} + x_{11}	&	x_{ 2} + x_{ 7} +  & x_{ 9} + x_{11}	&	x_{ 3} + x_{ 4} +  & x_{ 7} + x_{15}\\
	x_{ 3} + x_{ 4} +  & x_{10} + x_{12}	&	x_{ 3} + x_{ 5} +  & x_{ 6} + x_{15}	&	x_{ 3} + x_{ 5} +  & x_{ 7} + x_{13}\\
	x_{ 3} + x_{ 5} +  & x_{ 8} + x_{12}	&	x_{ 3} + x_{ 5} +  & x_{10} + x_{11}	&	x_{ 3} + x_{ 6} +  & x_{ 8} + x_{11}\\
	x_{ 4} + x_{ 5} +  & x_{ 9} + x_{11}	&	x_{ 4} + x_{ 6} +  & x_{ 7} + x_{12}	&	x_{ 4} + x_{ 8} +  & x_{ 9} + x_{10}\\
	x_{ 5} + x_{ 7} +  & x_{ 9} + x_{10}	&	\end{align*}

The associated linear program is infeasible.
\end{proof}

\begin{theorem}
	$g(4, 17) = \binom{16}{3} = 560$.
\end{theorem}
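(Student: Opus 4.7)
The plan is to follow the same computer-assisted branch-and-cut strategy used in the two preceding theorems, now invoked as \BranchAndCut$(17, 4, 560, \varnothing, \varnothing, \binom{[17]}{4})$ with the propagation step upgraded from \PropagateNegative\ to \PropagatePositive\ (Algorithm~\ref{alg:propagatepos}). Since the smaller cases $n \in \{11, 13, 14, 15\}$ all closed without branching, the first thing I would check is whether the same holds for $n = 17$, in which case the theorem reduces to transcribing the forced partition of $\binom{[17]}{4}$ produced by a finite number of alternating propagation phases.

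The first propagation pass uses \PropagateNegative\ to identify all 4-sets that must have strictly negative sum. These come from two sources: sets $S$ with $L_4(S) \geq 560$, handled by Observation~\ref{obs:forcenegative}, and sets $S$ with $1 \in S$ satisfying $L_4(S) + g(13, 4) \geq 560$, handled by Lemma~\ref{lma:baranyaitrick}, where we use the previously established value $g(13, 4) = 210$ from Table~\ref{tab:bandctiming34}. The resulting $\succeq$-maximal family $\genneg$ is then extended by \PropagatePositive: for each $S \in \cutstar$ in reverse lex order, I solve the LP $P(17, 4, \genpos, \genneg \cup \{S\})$ and, whenever it is infeasible, add $S$ to $\genpos$, update $\cutstar$, and call \PropagateNegative\ again. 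Each such augmentation of $\genpos$ raises $|\cL_4(\genpos)|$ and can trigger further negative inferences via Lemma~\ref{lma:propagatenegative}, so I iterate this \PropagatePositive\--\PropagateNegative\ loop until one of the termination conditions of Observations~\ref{obs:completeness}--\ref{obs:propagate} is met.

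The proof is complete once either $|\cL_4(\genpos)| \geq 560$ (enough nonnegative 4-sums forced) or $P(17, 4, \genpos, \genneg)$ is declared infeasible (no $(17, 4, 560)$-bad vector exists with the forced $\cutpos$/$\cutneg$ assignment). If propagation alone stalls before this point, I fall back on actual branching, selecting $S \in \cutstar$ that maximizes $\min\{L_4^*(S), R_4^*(S)\}$ and recursing on both $\branchpos \cup \{S\}$ and $\branchneg \cup \{S\}$; Table~\ref{tab:bandctiming34} records only 8 search nodes for this case, so the branch tree is very shallow.

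The main obstacle is bookkeeping rather than mathematics: with $\binom{17}{4} = 2380$ 4-sets to classify, the forced lists of negative and positive 4-sums become too long to present by hand in the clean format used in Appendices~\ref{apx:k3} and \ref{apx:k4} for $n \leq 15$. This is precisely why the authors omit the explicit transcript here and instead defer to the implementation. A secondary technical issue is that the LP feasibility tests inside \PropagatePositive\ must be performed in exact arithmetic (via GLPK) to yield a rigorous proof, rather than in the faster floating-point CPLEX mode; for $k = 4$ this is still tractable according to the timing columns of Table~\ref{tab:bandctiming34}.
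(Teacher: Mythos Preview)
Your approach is exactly the paper's approach, but you have misread what the paper actually contains. The explicit transcript for $g(4,17)=560$ \emph{is} included in Appendix~\ref{apx:k4}; it is the last of the three theorems there. Moreover, the run is shorter than you anticipate: a single pass of \PropagateNegative\ produces 44 forced-negative $4$-sums (from Observation~\ref{obs:forcenegative} and Lemma~\ref{lma:baranyaitrick} with $g(13,4)=210$), then a single pass of \PropagatePositive\ adds 15 forced-positive $4$-sums, at which point $|\cL_4(\genpos)|\geq 560$ already and the search halts with no branching and no second propagation round. So your worry that the lists are ``too long to present by hand'' is unfounded for this case, and your fallback to the 8-node branch tree (which comes from the \BranchAndCut\ run with only \PropagateNegative, not \PropagatePositive) is never invoked.
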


\begin{proof}
The following sums generated by \PropagateNegative\ (Algorithm~\ref{alg:propagation}) must be strictly negative or we have at least 560 nonnegative sets:
\begin{align*}
	x_{ 1} + x_{ 6} +  & x_{16} + x_{17}	&	x_{ 1} + x_{ 7} +  & x_{13} + x_{17}	&	x_{ 1} + x_{ 8} +  & x_{12} + x_{17}\\
	x_{ 1} + x_{ 8} +  & x_{15} + x_{16}	&	x_{ 1} + x_{ 9} +  & x_{13} + x_{16}	&	x_{ 2} + x_{ 5} +  & x_{15} + x_{17}\\
	x_{ 2} + x_{ 6} +  & x_{12} + x_{17}	&	x_{ 2} + x_{ 6} +  & x_{14} + x_{16}	&	x_{ 2} + x_{ 7} +  & x_{11} + x_{17}\\
	x_{ 2} + x_{ 7} +  & x_{12} + x_{16}	&	x_{ 2} + x_{ 8} +  & x_{10} + x_{17}	&	x_{ 2} + x_{ 8} +  & x_{11} + x_{16}\\
	x_{ 2} + x_{ 8} +  & x_{13} + x_{15}	&	x_{ 2} + x_{ 9} +  & x_{12} + x_{15}	&	x_{ 3} + x_{ 5} +  & x_{11} + x_{17}\\
	x_{ 3} + x_{ 5} +  & x_{13} + x_{16}	&	x_{ 3} + x_{ 6} +  & x_{10} + x_{16}	&	x_{ 3} + x_{ 6} +  & x_{12} + x_{15}\\
	x_{ 3} + x_{ 7} +  & x_{ 9} + x_{17}	&	x_{ 3} + x_{ 7} +  & x_{11} + x_{15}	&	x_{ 3} + x_{ 7} +  & x_{13} + x_{14}\\
	x_{ 3} + x_{ 8} +  & x_{10} + x_{15}	&	x_{ 3} + x_{ 8} +  & x_{12} + x_{14}	&	x_{ 3} + x_{ 9} +  & x_{11} + x_{14}\\
	x_{ 4} + x_{ 5} +  & x_{10} + x_{17}	&	x_{ 4} + x_{ 5} +  & x_{11} + x_{16}	&	x_{ 4} + x_{ 6} +  & x_{ 9} + x_{17}\\
	x_{ 4} + x_{ 6} +  & x_{11} + x_{15}	&	x_{ 4} + x_{ 6} +  & x_{13} + x_{14}	&	x_{ 4} + x_{ 7} +  & x_{ 9} + x_{16}\\
	x_{ 4} + x_{ 7} +  & x_{10} + x_{15}	&	x_{ 4} + x_{ 7} +  & x_{11} + x_{14}	&	x_{ 4} + x_{ 8} +  & x_{ 9} + x_{15}\\
	x_{ 4} + x_{ 8} +  & x_{10} + x_{14}	&	x_{ 4} + x_{ 9} +  & x_{12} + x_{13}	&	x_{ 5} + x_{ 6} +  & x_{ 9} + x_{16}\\
	x_{ 5} + x_{ 6} +  & x_{10} + x_{15}	&	x_{ 5} + x_{ 6} +  & x_{12} + x_{14}	&	x_{ 5} + x_{ 7} +  & x_{ 8} + x_{17}\\
	x_{ 5} + x_{ 7} +  & x_{ 9} + x_{15}	&	x_{ 5} + x_{ 7} +  & x_{10} + x_{14}	&	x_{ 5} + x_{ 8} +  & x_{11} + x_{13}\\
	x_{ 6} + x_{ 7} +  & x_{12} + x_{13}	&	x_{ 7} + x_{ 9} +  & x_{10} + x_{13}	&	
\end{align*}

The following sums generated by \PropagatePositive\ (Algorithm~\ref{alg:propagatepos}) must be nonnegative or else the associated linear becomes infeasible:
\begin{align*}
	x_{ 4} + x_{ 6} + &x_{ 9} + x_{11}	&	x_{ 4} + x_{ 6} + &x_{ 7} + x_{13}	&	x_{ 4} + x_{ 5} + &x_{10} + x_{11}\\
	x_{ 4} + x_{ 5} + &x_{ 9} + x_{14}	&	x_{ 4} + x_{ 5} + &x_{ 6} + x_{15}	&	x_{ 3} + x_{ 7} + &x_{ 9} + x_{11}\\
	x_{ 3} + x_{ 6} + &x_{ 9} + x_{13}	&	x_{ 3} + x_{ 6} + &x_{ 7} + x_{14}	&	x_{ 3} + x_{ 5} + &x_{11} + x_{12}\\
	x_{ 3} + x_{ 5} + &x_{10} + x_{14}	&	x_{ 3} + x_{ 5} + &x_{ 7} + x_{15}	&	x_{ 3} + x_{ 4} + &x_{11} + x_{13}\\
	x_{ 3} + x_{ 4} + &x_{ 9} + x_{15}	&	x_{ 3} + x_{ 4} + &x_{ 6} + x_{16}	&	x_{ 2} + x_{ 8} + &x_{10} + x_{12}
\end{align*}
These positive sets now force at least 560 nonnegative $4$-sums, and our target was $560$ nonnegative $4$-sums.
\end{proof}

\end{document}